%cw Scale-Space-230510
%cw Menu > Compiler=PdfLaTeX TexLive=2016 Overleaf v1 Legacy
%cw
%cw  scales_Multiple-Parameter < Old short name
%cw  scales_ga04-tam-9508 < Old filename
%cw
%cw  A Multiple Parameter Linear Scale-Space for one dimensional Signal Classification
%cw
%cw  BIB is built-in, has an extra bibitem #21, and 2 author names that were missing in prev vers.
%cw    LukeO=Luke Oeding 
%cw    lux_refs.bib < Not needed - req by prev vers
%cw
%cw Todo
%cw p10 eq28 - missing refs 
%cw p25 theorem 5.10 proof - series # missing (2 instances)
%cw DOIs

\documentclass[11pt]{article}
\usepackage{amsmath,amsthm,amsfonts,amscd,amssymb,mathrsfs,cite}
%\usepackage[colorinlistoftodos]{todonotes}
%cw \todo{This is a comment that will appear in the margin}

\newtheorem{theorem}{Theorem}[section]

\newtheorem{lemma}[theorem]{Lemma}

\newtheorem{prop}[theorem]{Proposition}

\newtheorem{cor}[theorem]{Corollary}

\theoremstyle{definition}

\theoremstyle{remark}

\newcommand{\RR}{\mathbb R}

\title{A Multiple Parameter Linear Scale-Space for one dimensional Signal Classification}

%\author{Leon A. Luxemburg\\ \\
%\small{Texas A\&M University at Galveston, Texas, U.S.A.}\\
%\small{luxembul@tamug.edu}}
%\date{} %This blanks the current date that \maketitle otherwise produces%

\author{Leon A. Luxemburg, \thanks{Department of Mathematics, Texas A\& M University at Galveston, Texas, U.S.A.: luxembul@tamug.edu}, 
and Steven B. Damelin \thanks{Department of Mathematics, University of Michigan, 530 Church Street, Ann Arbor, MI 48109, USA: damelin@umich.edu}}

\begin{document}
\maketitle

\begin{abstract}
In this article we construct a maximal set of kernels for a multi-parameter linear scale-space that allow us to construct trees for classification and recognition of one-dimensional 
continuous signals similar the Gaussian linear scale-space approach. Fourier transform 
formulas are provided and used for quick and efficient computations. A number of useful 
properties of the maximal set of kernels are derived. We also strengthen and generalize 
some previous results on the classification of Gaussian kernels. Finally, a new topologically 
invariant method of constructing trees is introduced. 
\end{abstract}

\section{Introduction}
Scale-space filtering provides a powerful framework for the structural feature 
extraction, and classification and recognition of waveforms. It is based on 
convolving a signal with a one-parametric family of kernels and the convo- 
lutions can be used to construct certain trees to correspond to the original 
signal (\cite{k2y, k20y, k10y, k18y, k16y}).
In this article we solve the following important problems: 

\noindent (I) We construct a maximal set of kernels that allows us to construct trees 
and have the property that the signals with the same shape result in equivalent trees. It turns out that this maximal set of kernels is a set of $p$th frac
tional derivatives of a Gaussian. (We introduce a variety of these fractional 
derivatives in this and in subsequent articles). Our main result, stated at 
the beginning of Section 4, strengthens and generalizes the results obtained 
in \cite{k2y} and \cite{k20y}, and demonstrates an alternative for linear scale-space representation of signals beyond the widely published Gaussian method (\cite{k20y, k7y, k2y, k13y, k6y}, and more recent methods 
using a spectrum of families of kernels (\cite{k12y, k8y, k17y, k19y, k5y} ). 

\noindent (II) In this section we introduce a new, topologically invariant way of constructing trees, which we expect to be less sensitive to noise. 

\noindent (III) In Section 5 we calculate the Fourier transforms of the kernels, which 
gives us a quick and efficient computational procedure for constructing trees. 

\noindent (IV) In Section 2 we review the scale-space axioms and explained how they 
are incorporated into our formulas. Briefly, we base our scale-space on the 
following axioms: \\
\noindent
(*)  translation invariance \\
(*)  linearity \\
(*)  strong causality \\
(*)  scale invariance \\
The axioms we do not incorporate are the \\
(*)  semi-group axiom \\ 
(*)  preservation of positivity\\
\noindent (V) In Section $5$ we prove a number of properties concerning the convergence 
of functions $\phi (x, y)$ and their partial derivatives, where $\phi (x, y)$ is defined as 
a convolution of a signal $f (x)$ with a kernel $\Lambda_{\alpha \beta p}  (x, y)$. We develop a general 
theory of convergence of kernels. 

In the next article \cite{k9y} we continue as follows: 

\noindent (VI) Using results in (I) and (II)
above, we give a complete topological description of level curves $\phi_{x} (x, y) = c $ as well as level curves of arbitrary partial derivatives of $\phi (x, y)$. 

\noindent (VII) If we consider a convolution $\Lambda_{\alpha \beta p}  (x, y) = f (x) * \Lambda_{ \alpha \beta p} (x, y)$ of a signal  $f (x)$ with a kernel $\Lambda_{\alpha \beta p} (x, y)$ then as $\alpha , \beta$  and $p$ vary, this results in different zero crossings and different trees. We study bifurcations of the curves under parameter change. This gives us useful integer invariants for the signal which provides another way to classify the signals. We also use methods of differential and algebraic topology to derive some global formulas for tree invariants.

Let 
\begin{equation} G(x, \rho ) = \frac{\rho}  {\sqrt{2 \pi}}  e^{ -(x\rho )^{2} /2 }
,\hspace{3em} \rho  > 0 ,
\end{equation}
be a family of Gaussian kernels with parameter $\rho$  varying on $(0, \infty )$. For every $\rho  > 0$ and signal $f (x)$, one can consider a convolution 

\[
\phi (x, \rho ) = f (x) * G(x, \rho ) \equiv \int_{-\infty}^{\infty } f (x - v) G(v, \rho ) dv =
\int_{-\infty}^{\infty} f (v)G(x - v, \rho ) dv \;, 
\]
and on the half plane $(x, \rho )$, $\rho  > 0$ we construct level curves $\phi (x, \rho ) = 0$ for 
fixed $\rho$. It turns out that Gaussian kernels provide a certain monotonicity 
condition which guarantees that as $\rho$  increases, maxima of $\phi (x, \rho )$ increase 
and minima decrease. (In many references this property is called \emph{causality} 
\cite{k8y} or \emph{strong causality} \cite{k5y}, and may be regarded as an axiom of scale-space 
theory.) This, in turn, guarantees that contours $\phi (x, \rho ) = 0$ can close above 
but not below. A.P. Witkin has proposed to construct the trees in the 
following way in \cite{k2y, k20y}: a horizontal line is drawn touching the highest 
contour at its apex. The region between the x-axis and the horizontal line is 
then divided by the highest contour into three sub-regions which correspond 
to three branches of the first rank in the tree. As we proceed by drawing 
horizontal lines through the next highest apex of a contour, we get further 
subdivision of a sub-region into three parts, which gives us the branches of 
the second rank, etc., resulting in a trinary tree. The order of the branches 
of the same rank cannot be changed. The advantage of this method is 
that it is hierarchical, i.e., if three branches come out of a certain node 
(horizontal line), there would be no additions (of branches) to this node 
as $\rho$  increases. We note that this method may distinguish between two 
topologically equivalent sets of contours. (Two curves $\phi_{ 1}$ and $\phi_{ 2}$ on our half-plane can be called topologically equivalent if there is a homeomorphism of the closed half plane onto itself such that $\phi_{ 1}$ maps onto $\phi_{ 2}$.)

In the present paper we propose a new method for constructing a tree 
which can be used as an alternative to the very useful method described 
above. First, we form a node and construct first rank branches corresponding 
to connected contours that are not inside other contours. Then from each 
new node $n$, we construct branches corresponding to contours lying inside 
the contour corresponding to the node $n$. This forms branches of the second 
rank, etc. We expect these trees (which need not be trinary) to be less 
susceptible to noise --- which would be unlikely to disturb the topological 
equivalence of contours. For a function $f$ , the trinary tree constructed by 
the Witkin method (first method) will be denoted by $T W (f )$. The tree 
constructed via topological invariance (second construction) is denoted by 
$T T (f )$. It is easy to see that if $T W (f ) = T W (g)$ for two functions $f$ and $g$, 
then $T T (f ) = T T (g)$. The converse is not true in general. 

Constructing a tree $T T (f )$ or $T W (f )$ is a first important step in signal 
classification. It is clear, however, that a class of functions having the same 
tree is very broad. In some instances we would like to have a quick and rather 
coarse classification, but in many cases we want to separate various signals 
whose trees are identical. Consider, for example, a class $A_{n}$ of functions 
having no more than $n$ extrema. This class contains all polynomials of 
degree at most $n + 1$. Clearly, $A_{n}$ has an uncountable number of elements. 
However, it is easy to prove that there could be only a finite number of trees 
of both kinds constructed for the functions of this class. As a further step in 
refining this classification one might suggest using partial derivatives 
$\frac{\partial^{n} G }{\partial  x^{n}}$
of a Gaussian as convolution kernels. It turns out that for a rather large class 
of signals (containing all those that grow not faster than an exponential) 
one can prove that 
\[f (x) 
* \frac{\partial^{n} G }{\partial  x^{n}} = (-1)^{n} 
\frac{\partial^{n} f }{\partial  x^{n}}*G
.\] 
Thus, convolving with an $n$th partial derivative of $G$ in constructing a tree is 
the same as constructing a tree 
$T W\left( f ^{(n)} (x) \right)$ or $T T\left( f ^{(n)} (x) \right)$,
where $f^{ (n)} (x)$ 
is the $n$th derivative of $f (x)$. One might, therefore, construct a series of 
trees  
$TW\left(f^{(n)}(x) \right)$
or $TT\left(f^{(n)}(x) \right)$
for every function $f$ , thus providing a 
finer classification. It is easy to see, however, that there still would be un- 
countable subsets of the class $A_{n}$ consisting of elements indistinguishable by 
this classification. This is a motivation for finding a further refinement for 
signal classification, which we will do by also computing fractional deriva- 
tives of a Gaussian and allowing more than one parameter for describing 
the family of kernels. In addition, we propose to construct (along with 
trees for curves representing zero crossings $\phi (x, \rho ) = 0$) trees for level curves 
$\phi (x, \rho ) = c$, $c \neq 0$. The advantage of such a construction is (as we will prove 
in \cite{k9y}) that under very general conditions these level crossings are bounded 
for $c\neq 0$ and therefore allow for construction of trees by methods described 
above. Using this fact we will, in \cite{k9y}, construct two-dimensional surfaces 
and use Morse Theory to construct useful integer invariants for the pattern 
recognition of signals. 
\section{Conditions for a Linear Scale-Space}
In this article we will consider a signal $f$ to be \emph{admissible} if $f \in  L_{2} (\RR)$.  %grammar%
More general admissible signals could be considered, but this is 
sufficient for practical purposes. Whenever a reference is made to an $n$th 
derivative $f^{ (n)} (x)$ of an admissible signal, it is assumed that $f (x)$ is $n$ times 
differentiable. 

The family of kernels must be such that the construction of the tree 
$T W_{g} (f )$ should be possible, i.e. the contours should satisfy the monotonicity 
condition (or axiom of strong causality), which we express here (see \cite{k2y}) as 
\begin{equation}
\begin{array}{l}
\phi_{\rho}  (x, \rho ) \phi_{xx} (x, \rho ) < 0  
\text{ whenever either } 
\phi_{\rho}  (x, \rho ) \neq 0 
\text{ or }
\phi_{xx} (x, \rho )\neq 0, \\
\text{where } \phi (x, \rho ) = f (x) * g(x, \rho ), \text{ for } g \in  M,
\end{array}
\end{equation} %formatting question: where do you want the number for the equation?%
\noindent 
which is equivalent to the requirement that the level curves $\phi (x, \rho ) = c$ never 
close below (even locally), thus enabling us to implement the construction 
of trees from the level curves. 

\noindent \textbf{Remark}: If we require only monotonicity for zero crossing curves we need 
a somewhat weaker condition: 

\begin{equation}
\begin{array}{l}
\phi_{\rho}  (x, \rho ) \phi_{xx} (x, \rho ) < 0  
\text{ whenever } 
\phi_{x}  (x, \rho ) = 0 
\text{ and either } \\
\phi_{\rho} (x, \rho ) \neq 0 \text{ or } \phi_{xx} (x, \rho ) \neq 0. 
\end{array}
\end{equation} %same formatting question as above% % also, do you want a dot for multiplication?

\noindent \textbf{Remark}: 
We have expressed the function $g = g(x)$ as depending on a single 
parameter $\rho$  (i.e., $g = g(x, \rho )$). We will show below that, in fact, more parameters will be needed in order to completely describe the maximal family $M$. The parameter $\rho$  is sometimes referred to as the \emph{bandwidth} parameter (\cite{k2y} for example). The parameter ($\sigma$) which is the reciprocal of $\rho$  is usually called the \emph{scale} parameter (\cite{k20y} for example). If the scale parameter is 
used instead of the bandwidth parameter then the inequality in (2) changes 
direction. 

We now state another condition on the set $M$ : Suppose that a signal $f (x)$ is convolved with $g(x, \rho ) \in  M$ at a certain value $\rho  = \rho_{0}$ , and let us 
scale $f (x)$ up by a certain number $k$ to get the signal $f (kx)$. Since scaling 
does not change the fundamental properties of the signal, we should be 
able to recognize it as the same signal, i.e., to construct the same tree 
$TW_{g} \left( f (kx) \right) = T W_{g} \left( f (x) \right)$. Clearly, for each $\rho$  the shape of the convolution 
$f (kx) * g(kx, \rho )$ is the same as that of $f (x) * g(x, \rho )$. We want, however, to 
require that after scaling of the argument $x$ and convolving with $g(x, y)$ we 
get the same shape of the signal as without scaling. More precisely, there 
should be a real number $\tau = \tau (k, \rho )$ such that for any $\rho  > 0$, 
\[
f (kx) * g(x, \tau ) = cf (x) * g(x, \rho ) 
\]
for some constant $c = c(k, \rho )$. In particular, this equality is to hold for the 
$\delta$-function. This implies that 
\[
g(kx, \tau (k, \rho )) = cg(x, \rho ) . 
\]
Now recall that the main idea of the scale-space approach is that the scale 
becomes finer as the parameter $(\rho )$ increases, therefore for a fixed $k$, $\tau (k, \rho )$
should increase as $\rho$  increases. For the same reason, as $k$ increases, $\tau (k, \rho ) $
should decrease. Thus, 
\[
\frac{\partial  \tau (k, \rho ) } {\partial  k} < 0 , 
\hspace{2em}
\frac{\partial  \tau (k, \rho ) }
{\partial  \rho } > 0 . 
\]
Furthermore, since scaling $x$ by $k$ results in a signal with a $k$-times finer 
scale, we are motivated to define $\tau(k, \rho ) = \rho /k$. Thus, our next requirement 
is that 

\begin{equation}
g\left(kx,\frac{\rho}{k}\right)=c(k,\rho)g(x,\rho).
\end{equation}
This equation is an expression of the axiom of \emph{scale invariance}. Different 
expressions for this axiom are offered in \cite{k19y} %grammar fix
(where it is required that  $g(kx, \rho') = g(x, \rho')$ for some $\rho'$), \cite{k8y}, and \cite{k5y}, for example.
It implies, in particular, that the tree constructed for $f (kx)$ is the same as the one constructed 
for a signal $f (x)$. 

In order to assure stability of a kernel and the existence of 
$f * g(x, \rho )$ for 
continuous functions 
$f \in  L_{p} (\RR (x))$ with $1 \leq p \leq \infty$, we need the requirement $g(x, \rho ) \in  L_{1} (\RR(x))$ for all $\rho  > 0$, i.e. 
\begin{equation}
\int_{-\infty}^{\infty } |g(x, \rho ) | dx < \infty  , \text{ for all } \rho  > 0.
\end{equation}
Finally, we want to satisfy the maximality condition of $M$. This condition 
can be stated in two different ways: 

\emph{Every kernel satisfying properties (2), (4) and (5) belongs to M},

\noindent or 

\emph{every kernel $g(x, \rho )$ satisfying properties (2), (4) and (5) has an equiva-
\indent lent kernel $l(x, \rho ) \in  M $}.

\noindent 
(Two kernels $g(x, \rho )$ and $l(x, \rho )$ are equivalent if and only if there exist two 
real numbers $c$ and $k$ such that  $cg(kx, \rho ) = l(x, \rho )$.) 
We give the name $MW$ to an alternative maximal set which is described 
in the same way as $M$ except that the strong monotonicity condition (2) is 
replaced by a weak one (3). Thus $MW$ is described by either %cw Block-indent 

\emph{Every kernel satisfying properties (3), (4) and (5) belongs to $MW$}, 

\noindent or
 
\emph{every kernel $g(x, \rho )$ satisfying properties (3), (4) and (5) has an equiva-
\indent lent kernel $l(x, \rho ) \in  M W$}. 
We consider kernels $g(kx, \rho )$ and $g(x, \rho )$ equivalent because, if the condi- 
tion (4) is satisfied, we can obtain the kernel $g(kx, \rho_{0} )$ up to a constant 
factor by scaling $\rho_{0}$ down by $k$ and taking $g(x, \rho_{0} /k)$. We will also con- 
sider kernel $cg(x,\rho)$
equivalent to $g(x, \rho )$
for any constant $c\neq 0$,
because 
obviously, $T W_{cg} (f ) = T W_{g} (cf ) = T W_{g} (f )$ (this means that the axiom of 
linearity, with respect to multiplication, is satisfied by a convolution oper- 
ator) and we do not want to distinguish between f (x) and cf (x). Thus, 
equivalent kernels always result in the same tree for the same function.
 
\noindent \textbf{Remark}: Notice that condition (2) makes sense only if the partial derivatives $\phi_{xx}$, $\phi_{\rho}$, and $\phi_{x}$ exist. This is guaranteed by the existence of partial 
derivatives $g_{xx}$ , $g_{x}$ , and $g_{\rho}$  of the kernel $g(x, \rho )$. We will therefore need to 
verify below that partial derivatives $g_{x}$, $g_{\rho}$  , and $g_{xx}$ exist and are continuous 
and that they belong to $L_{1} (\RR(x))$. 

\noindent \textbf{Remark}: We have assumed a number of axioms which are required for an 
axiomatic basis for a linear scale-space representation of signals. Explicitly mentioned thus far are the axioms of \emph{linearity} (with respect to multiplication), \emph{strong causality}, and \emph{scale invariance}. In addition, we have implicitly assumed the axiom of \emph{translation invariance}, which is satisfied by a convolution operator acting on a signal. All of these axioms, together with the axiom of \emph{recursivity} (also known as the semi-group axiom ): 
$\left(f(x)*
g(x,\rho_{1})\right)
*g(x, \rho_{2} ) 
=
f (x) * g(x, \rho_{1} + \rho_{2} )
$, are necessary for a linear integral operator on admissible signals to be a Gaussian convolution operator. (It has been shown in \cite{k12y} that these axioms are, however, not sufficient for the operator to be Gaussian convolution. In fact, a continuum of convolution operators are possible, with the Gaussian kernel $g(x, \rho ) = G(x, \rho )$, Cauchy kernel $g(x, \rho ) = \frac{\rho}{\pi (1+\rho^{2} x^{2} )}$ , and sinc-function $g(x) = \frac{sin(x)}{x} $ being the three basic (closed-form) kernel functions which are possible. In this article we are not going to assume the axiom of recursivity, but we will assume (or 
have assumed) the other mentioned axioms. The family of kernel functions 
we will derive will be expressed in terms of a certain class of hypergeometric functions, so we pause to review some properties of these functions and 
introduce the class of functions we will need. 

\section{Review of Hypergeometric Functions}
In order to derive our expression for the kernel functions $g(x, \rho )$, we need 
to review some of the properties of hypergeometric functions; in particular, 
confluent hypergeometric functions. Furthermore we will show how these 
hypergeometric functions are related to derivatives of the Gaussian function $G(x, \rho )$,
and also show how certain confluent hypergeometric functions 
can be expressed as solutions of a second order linear differential equation. 
After that we will derive expressions for the Fourier transforms of these 
hypergeometric functions. 

For any $a \in \RR$ and any positive integer $n$ denote by $(a)_{n}$ the \emph{rising factorial} $a(a + 1) \cdots (a + n - 1)$, and set $(a)_{0} = 1$. If $b \in  \RR$ is not a negative 
integer or zero then the confluent hypergeometric function $M (a, b, z )$, known 
as Kummer's function of the first kind, given by the series 
\begin{equation}
M (a, b, z) = \sum_{n=0}^{\infty} \frac{(a)_{n}}{(b)_{n}}\frac{z^{n}}{n!}
\end{equation}
is an entire function (the ratio convergence test guarantees convergence for 
all complex numbers $z$). For any real number $p$ we define an even function 
$\Lambda^{e}_{p} (x)$ and an odd function 
$\Lambda^{o}_{p}(x)$ as follows: 

\begin{align}
\Lambda^{e}_{p}(x) &= \frac{1}{\sqrt{2 \pi}} M \left( \frac{p+1}{2}, \frac{1}{2}, -\frac{x^{2}}{2}  \right) \notag \\
&=  \frac{1}{\sqrt{2 \pi}}  \sum_{n=0}^{\infty} \left[  \frac{x^{2n}}{(2n)!} (-1)^{n} \prod_{k=0}^{n-1} (p+2k +1)   \right] \notag \\
& = \frac{1}{\sqrt{2 \pi}} - \frac{(p+1)x^{2}}{2! \sqrt{2 \pi}} + \frac{(p+1)(p+3)x^{4}}{4! \sqrt{2 \pi}} - \cdots
\end{align}

\begin{align}
\Lambda^{o}_{p}(x) &= \frac{x}{\sqrt{2 \pi}} M \left( \frac{p+2}{2}, \frac{3}{2}, -\frac{x^{2}}{2}  \right) \notag \\
&=  \frac{1}{\sqrt{2 \pi}}  \sum_{n=0}^{\infty} \left[  \frac{x^{2n+1}}{(2n+1)!} (-1)^{n} \prod_{k=0}^{n-1} (p+2k)   \right] \notag \\
& = \frac{x}{\sqrt{2 \pi}} - \frac{(p+2)x^{3}}{3! \sqrt{2 \pi}} + \frac{(p+2)(p+4)x^{5}}{5! \sqrt{2 \pi}} - \cdots
\end{align}

Functions
$\Lambda^{\theta}_{p}(x)$, %typo?-LukeO 
$\theta = e,o$  are entire for all $p$. The following properties are a consequence of the definitions (7) and (8). 
\begin{equation}
\Lambda^{e}_{0}(x) = \frac{e^{-x^{2}/2}}{\sqrt{2 \pi}}
.
\end{equation}
\begin{equation}
\frac{d \Lambda^{e}_{p}(x)}{dx} = - (p+1)\Lambda^{o}_{p+1}(x)
.
\end{equation}
\begin{equation}
\frac{d \Lambda^{o}_{p-1}(x)}{dx} = \Lambda^{e}_{p}(x)
.\end{equation}
Therefore, 
\begin{equation}
\frac{d^{2n} \Lambda^{e}_{p} (x)}{dx^{2n}} = 
(-1)^{n} (p + 1)(p + 3)\cdots(p + 2n - 1)\Lambda^{e}_{p+2n}(x) , 
\end{equation}
\begin{equation}
\frac{d^{2n} \Lambda^{o}_{p} (x)}{dx^{2n}} = 
(-1)^{n} (p + 2)(p + 4)\cdots(p + 2n)\Lambda^{o}_{p+2n}(x) .
\end{equation}
By setting $p = 0$ in these equations it follows that 
\begin{equation}
\frac{d^{2n} \Lambda^{e}_{0} (x)}{dx^{2n}} = 
\frac{(-1)^{n} (2n)! \Lambda^{e}_{2n}(x)}{2^{n}n!} , 
\end{equation}
\begin{equation}
\frac{d^{2n} \Lambda^{e}_{0} (x)}{dx^{2n}} = 
\frac{(-1)^{n+1} (2n+2)! \Lambda^{0}_{2n+1}(x)}{2^{n+1}(n+1)!}.
\end{equation}
For hypergeometric series, it is known \cite[page 504]{k1y} that if $z$ is complex 
and $R(z ) < 0$, then as $|z | \rightarrow \infty$  we have the equality 
\begin{equation}
M(a,b,z) = \frac{\Gamma(b)}{\Gamma(b-a)} (-z)^{-a} \left( 1 + O \left(|z|^{-1} \right) \right).
\end{equation}
Notice that the gamma function is not defined for integers less than or equal 
to zero. Therefore, (16) does not apply only when $b - a$ is zero or a negative 
integer. Equality (16) implies that for $x$ real 
\begin{equation}
\Lambda^{e}_{p}(x) = C_{1}x^{-p-1}  \left( 1 + O \left(|x|^{-2} \right) \right) \;\text{ as } x \rightarrow \infty,
\end{equation}
\begin{equation}
\Lambda^{o}_{p}(x) = C_{2}x^{-p-1}  \left( 1 + O \left(|x|^{-2} \right) \right) \;\text{ as } x \rightarrow \infty,
\end{equation}
where $C_{1}$ and $C_{2}$ are determined from (16). By the previous comment, 
formula (17) does not apply only if $p = 2n$ for any non-negative integer $n$, 
and formula (18) does not apply only if $p = 2n + 1$ for any non-negative 
integer $n$ (in particular, it does apply if $p = 0$). 

Since the Gaussian kernel and its derivatives are in $L_{1}(\RR)$ formulas (9), 
(14) and (15) imply that $\Lambda^{ e}_{2n} (x)$ and $\Lambda^{ o}_{2n+1}(x)$ are in $L_{1}(\RR)$ for integers $n \geq 0$. Also, it now follows from (17) and (18) that 
$\Lambda^{\theta}_{p} (x)$, $\theta = e, o$ are in $L_{1} (\RR)$
if $p > 0$, but not in $L_{1} (\RR)$ if $p < 0$. 
Furthermore $\Lambda ^{e}_{0} (x) \in  L_{1} (\RR)$;
however, $\Lambda^{o} _{0} (x)\not\in L_{1} (\RR)$. 
In addition, the following equalities are satisfied only if $ p > -1$ 
\begin{equation}
\lim_{x\rightarrow \infty}\Lambda^{\theta}_{p}(x) = \lim_{x\rightarrow -\infty}\Lambda^{\theta}_{p}(x), \hspace{1em} \theta= e,o. 
\end{equation}
(This is obviously true for the Gaussian and its derivatives.) 
Let us continue with our preliminary observations. For real numbers $a$ 
and $p$ consider a differential equation
\begin{equation}
\ddot{h}(x) + a^{2} x \dot{h}(x) + a^{2} p h(x) = 0 ,
\end{equation}
where
\begin{equation}
\ddot{h} = \frac{d^{2}h(x)}{dx^{2}}, \hspace{1em} \dot{h}(x) = \frac{dh(x)}{dx}.
\end{equation}
\begin{lemma}
Let $h(x)$ be an arbitrary even (respectively odd) solution of 
equation (20). Then $h(x)$ is given by 
\begin{equation}
h(x) = c \lambda^{e}_{p-1}(ax) \hspace{1em} \left(\text{respectively } h(x) = c \Lambda^{o}_{p-1}(ax) \right)
\end{equation}
where
\begin{equation}
c = \sqrt{2\pi} h(0) \hspace{1em}\left(\text{respectively } c = \sqrt{2 \pi} \dot{h}(0) \right).
\end{equation}
The general solution of equation (20) is a linear combination 
\begin{equation}
\alpha\Lambda^{e}_{p-1}(ax) + \beta \Lambda^{o}_{p-1}(ax).
\end{equation}
\end{lemma}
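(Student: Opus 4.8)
The plan is to use the structure of (20) as a second order linear ODE together with its reflection symmetry, and then to verify the two explicit solutions directly from the series (7)--(8). Since the coefficients of (20) are polynomials (hence entire) and the leading coefficient is $1$, the classical existence--uniqueness theorem at an ordinary point gives that the solution space is two dimensional, that every solution is an entire function of $x$, and that a solution is determined by the pair $(h(0),\dot h(0))$. (I take $a\neq 0$ throughout; for $a=0$ equation (20) degenerates to $\ddot h=0$.) The reflection $x\mapsto -x$ carries (20) to itself, so if $h(x)$ solves (20) then so does $h(-x)$, and hence so do the even and odd parts $\tfrac12(h(x)\pm h(-x))$. Thus the solution space is the direct sum of its even and odd subspaces, and the lemma reduces to showing (i) $\Lambda^{e}_{p-1}(ax)$ spans the even subspace, (ii) $\Lambda^{o}_{p-1}(ax)$ spans the odd subspace, and (iii) the constant in (22) is the one given in (23); formula (24) then follows immediately from the decomposition.

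Next I would carry out the dimension count. An even entire function has vanishing first derivative at the origin, so an even solution is determined by $h(0)$ alone and the even subspace is at most one dimensional; similarly an odd solution is determined by $\dot h(0)$, so the odd subspace is at most one dimensional. Since the two subspaces together span the full two dimensional solution space, each is exactly one dimensional. It therefore suffices to produce one nonzero even solution and one nonzero odd solution of the asserted shape; the constant $c$ is then read off by evaluating at $0$, which yields $c=\sqrt{2\pi}\,h(0)$ in the even case because $\Lambda^{e}_{p-1}(0)=1/\sqrt{2\pi}$ by (7), and the analogous relation in the odd case from $\dot h(0)$ and the leading term of $\Lambda^{o}_{p-1}(ax)$ in (8). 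The functions $\Lambda^{e}_{p-1}(ax)$ and $\Lambda^{o}_{p-1}(ax)$ are visibly nonzero and of the correct parity (a power series in $x^{2}$, respectively $x$ times such a series), so it only remains to check that they solve (20).

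For that verification I would argue straight from the power series: substituting $h(x)=\sum_{n\ge 0}c_{n}x^{n}$ into (20) and matching coefficients of $x^{m}$ gives the two-term recursion $(m+2)(m+1)c_{m+2}=-a^{2}(m+p)c_{m}$, and, starting from $c_{1}=0$ (respectively $c_{0}=0$), this recursion forces the $c_{n}$ to be exactly the coefficients of $\Lambda^{e}_{p-1}(ax)$ in (7) (respectively of $\Lambda^{o}_{p-1}(ax)$ in (8)) --- one checks this by comparing ratios of consecutive coefficients with the rising-factorial products displayed there. Equivalently and more conceptually, the change of variable $z=-x^{2}/2$ (using $x^{2}=-2z$), under the ansatz $h=\tfrac{1}{\sqrt{2\pi}}w(z)$ (respectively $h=\tfrac{x}{\sqrt{2\pi}}w(z)$), converts (20) with $a=1$ into the statement that $w$ satisfies the standard Kummer equation $z\,w''+(b-z)\,w'-q\,w=0$ with $(q,b)=(\tfrac p2,\tfrac12)$ (respectively $(\tfrac{p+1}{2},\tfrac32)$), whose solution regular at the origin is precisely the confluent hypergeometric function $M$ occurring in (7) (respectively (8)); the general-$a$ case follows since $h(ax)$ solves (20) whenever $h$ solves the $a=1$ equation.

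The argument has essentially no conceptual obstacle --- the one piece of real work is bookkeeping. In the conceptual route one must push the chain rule through carefully (in the odd case the extra factor $x$ makes $\ddot h$ a three-term expression) and then recognize the resulting equation, up to the overall factor $-2$, as Kummer's equation with the right parameters; in the series route one must match the recursion against the explicit products $\prod_{k}(p+2k+1)$ and $\prod_{k}(p+2k)$ in (7)--(8). Both are routine once one uses $x^{2}=-2z$ (respectively the reindexing $m\mapsto m+2$), so the principal hazard is merely an arithmetic slip; the single hypothesis that should be made explicit is $a\neq 0$.
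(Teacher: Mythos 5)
Your proposal is correct and follows essentially the same route as the paper: verify by direct substitution of the series (7)--(8) that $c\Lambda^{e}_{p-1}(ax)$ and $c\Lambda^{o}_{p-1}(ax)$ solve (20), then invoke uniqueness of solutions of the initial value problem to identify an arbitrary even (resp.\ odd) solution with these, the constant being read off from the value of $h(0)$ (resp.\ $\dot h(0)$). Your added care about the parity decomposition of the two-dimensional solution space and the explicit hypothesis $a\neq 0$ (which the paper leaves tacit in this lemma, stating it only in Lemma 3.2) are sensible refinements but do not change the argument.
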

\begin{proof}
Since functions $\Lambda^{\theta}_{p}(x), \theta= e, o$ are entire for every $p$, we can differentiate each monomial separately. It can be verified by direct substitution of 
(7) and (8) into (20) that the function 
$c \Lambda^{ e}_{p-1} (ax)$ (respectively $c \Lambda^{o}_{p-1}(ax)$) 
satisfies (20) with initial conditions $\Lambda^{ e}_{p-1} (0) = c/\sqrt{2 \pi }$ , 
$\frac{d\Lambda^{e}_{p-1}(0)}{dx}= 0$, (respectively $\Lambda^{0}_{p-1} (0) = 0$ , 
$\frac{d\Lambda^{e}_{p-1}(0)}{dx}= c/\sqrt{2 \pi }$ ).
Now let $h(x)$ be an even (respectively odd) solution of (20) with initial conditions $h(0) = \alpha$ , 
$\dot{h}(0) = 0$ 
(respectively $h(0) = 0$, $\dot{h}(0) = \beta )$.
From the uniqueness of solutions of differential equation (20), it follows that $h(x) = \alpha \sqrt{2 \pi } \Lambda^{e}_{ p-1} (ax)$ (respectively  $h(x) = \beta \sqrt{2 \pi } \Lambda^{o}_{ p-1} (ax)$ ). Hence we get the expression (24) for the general 
solution of (20). \end{proof}

\begin{lemma} Let $h(x)$ be a function satisfying two differential equations 
\begin{align}
&\ddot{h}(x) + a^{2}_{1} x \dot{h}(x) + a^{2}_{1} (p_{1} + 1)h(x) = 0, \hspace{1em} a_{1} \neq 0,&\hspace{-4em}  \text{and} \\ 
&\ddot{h}(x) + a^{2}_{2} x \dot{h}(x) + a^{2}_{2}(p_{2} + 1)h(x) = 0, \hspace{1em} a_{2}\neq 0 . \end{align}
Then $a_{1} = a_{2}$ and $p_{1} = p_{2}$ if $h(x)$ is not identical ly zero. 
\end{lemma}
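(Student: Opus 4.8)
The plan is to subtract the two differential equations from one another and exploit the fact that the result is a first-order linear relation between $h$ and $\dot h$, which forces $h$ (if nonzero) to have very restricted behavior. Subtracting (27) from (26) gives
\[
(a_1^2 - a_2^2)\, x\, \dot h(x) + \bigl(a_1^2(p_1+1) - a_2^2(p_2+1)\bigr) h(x) = 0
\]
for all $x$. First I would argue that $a_1^2 = a_2^2$: if not, then for $x \neq 0$ we get $\dot h(x) = C\, h(x)/x$ for a constant $C = -\bigl(a_1^2(p_1+1) - a_2^2(p_2+1)\bigr)/(a_1^2 - a_2^2)$, whose solutions are $h(x) = D\,|x|^{C}$ on each half-line. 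But by Lemma 2.6 (applied to either of the two equations), $h$ must be a linear combination $\alpha\Lambda^e_{p-1}(a x) + \beta\Lambda^o_{p-1}(a x)$, hence entire, and in particular $h$ and all its derivatives are finite at $x = 0$; moreover $h(0)$ and $\dot h(0)$ cannot both be zero (else $h \equiv 0$ by uniqueness for (20)), so $h$ is a nontrivial entire function. A nontrivial entire function cannot agree with $D|x|^C$ near the origin on both sides unless $C$ is a nonnegative even integer and $h$ is (locally) an even monomial times a constant — but a power function $x^C$ is not a solution of an equation of the form (20) for any choice of parameters (substituting $x^C$ into (20) produces incompatible terms $C(C-1)x^{C-2}$, $a^2 C x^{C}$, $a^2 p x^C$ which cannot cancel), giving a contradiction. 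Hence $a_1^2 = a_2^2$, so $a_1 = a_2$ (both equations carry the factor as $a_i^2$, and nonzero).

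With $a_1 = a_2 =: a$, the subtracted equation collapses to $a^2\bigl((p_1+1) - (p_2+1)\bigr) h(x) = 0$, i.e. $a^2 (p_1 - p_2) h(x) \equiv 0$. Since $a \neq 0$ and $h$ is not identically zero, we conclude $p_1 = p_2$.

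The step I expect to be the main obstacle is the rigorous exclusion of the case $a_1^2 \neq a_2^2$: one must rule out that the nontrivial entire solution $h$ could, on a punctured neighborhood of the origin, coincide with a power-type function $D|x|^C$. The cleanest route is probably not to appeal to the explicit form $D|x|^C$ at all, but rather to combine the relation $\dot h(x) = C h(x)/x$ with the original second-order equation (26): differentiating $\dot h = C h /x$ gives $\ddot h = C\dot h/x - C h/x^2 = (C^2 - C) h/x^2$, and substituting both $\dot h$ and $\ddot h$ into (26) yields $\bigl((C^2 - C) + a_1^2 C x^2 + a_1^2(p_1+1) x^2\bigr) h(x)/x^2 = 0$ for $x \neq 0$; since $h$ is entire and not identically zero its zeros are isolated, so the bracketed polynomial in $x$ must vanish identically, which is impossible because its constant term $C^2 - C$ and its $x^2$-coefficient $a_1^2(C + p_1 + 1)$ would both have to be zero, forcing $C \in \{0,1\}$ and then $h$ constant or linear — neither of which satisfies the pair of equations with $a_1^2 \neq a_2^2$ unless $h \equiv 0$. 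This polynomial-identity argument avoids any delicate regularity discussion at $x=0$ and makes the contradiction fully algebraic.
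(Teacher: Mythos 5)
Your route is genuinely different from the paper's. The paper first invokes Lemma 3.1 to write $h$ explicitly as $\alpha_i\Lambda^{e}_{p_i}(a_i x)+\beta_i\Lambda^{o}_{p_i}(a_i x)$ for each of the two equations, identifies $\alpha_1=\alpha_2$ and $\beta_1=\beta_2$ from the values of $h(0)$ and $\dot h(0)$, and then equates power-series coefficients of the two expansions. Your subtraction of the two ODEs, reduction to the first-order relation $x\dot h=Ch$, and the polynomial identity $(C^2-C)+a_1^2(C+p_1+1)x^2=0$ off the (isolated) zeros of the entire function $h$ is more elementary and avoids the hypergeometric series entirely. Up to that point the argument is sound, and it correctly yields $C\in\{0,1\}$ together with $C=-(p_1+1)$ and, by symmetry, $C=-(p_2+1)$, so that $p_1=p_2$ follows in every case.

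The last step, however, is wrong. You claim that the cases $C=0$ ($h$ constant) and $C=1$ ($h$ linear) cannot occur with $a_1^2\neq a_2^2$ unless $h\equiv 0$. They can: $h\equiv 1$ with $p_1=p_2=-1$ satisfies $\ddot h+a^2x\dot h+a^2(p+1)h=0$ for \emph{every} $a\neq 0$, and $h(x)=x$ with $p_1=p_2=-2$ likewise; so the conclusion $a_1=a_2$ genuinely fails there. Your own computation exposes this: $C=0$ forces $p_1=p_2=-1$, after which the subtracted equation reads $0=0$ and places no constraint on $a_1,a_2$. Hence the lemma as stated is false for $p_i\in\{-1,-2\}$, and your proof needs either an added hypothesis (that $h$ is not a polynomial of degree at most one, equivalently $p_i\notin\{-1,-2\}$) or the weaker conclusion that only $p_1=p_2$ holds in general. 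To be fair, the paper's coefficient-matching proof has the same silent gap (for $p_1=-1$ every coefficient of $x^{2n}$, $n\geq 1$, vanishes and carries no information about $a$), and in the application inside Theorem 4.1 these degenerate kernels do not arise; but your write-up asserts the false dismissal explicitly rather than leaving it implicit. A smaller point: since only $a_i^2$ enters the equations, $a_1^2=a_2^2$ gives $a_1=\pm a_2$, not $a_1=a_2$; that ambiguity is inherent in the statement and is harmless (the sign can be absorbed using the parity of $\Lambda^{e}$ and $\Lambda^{o}$), but it should not be passed over as if $a_1=a_2$ followed outright.
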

\begin{proof} By Lemma 3.1 and (25), (26) we have:
\begin{equation} 
h(x) = \alpha_{1} \Lambda^{ e}_{p_{1}} (a_{1} x) + \beta_{ 1} \Lambda^{ o}_{p_{1}} (a_{1} x) = \alpha_{ 2} \Lambda^{ e}_{p_{2}} (a_{2} x) + \beta_{ 2} \Lambda^{ o}_{p_{2}} (a_{2} x) . 
\end{equation}
From (7) and (8) it follows that
\begin{equation}
h(0) = \frac{\alpha_{1} }{\sqrt{2 \pi}}  = \frac{\alpha_{2} }{\sqrt{2 \pi}}, \hspace{2em}
\dot{h}(0) = \frac{\beta_{1} }{\sqrt{2 \pi}}  = \frac{\beta_{2} }{\sqrt{2 \pi}}.
\end{equation}
Thus, $\alpha_{ 1} = \alpha_{ 2}$, and $\beta_{ 1} = \beta_{ 2}$. 
By virtue of analyticity of the function $h(x)$ and the uniqueness of power 
series expansion, the coefficients of corresponding terms $x^{n}$ are equal in both 
expressions for $h(x)$ in (27). By equating these coefficients and using (6), 
(7) and (8) we obtain the desired result. \end{proof}

\begin{theorem} If $p > 0$, then the Fourier transforms $F^{\theta}_{p} (\omega)$ of $\Lambda^{\theta}_{p} (x), \theta = e, o$ 
exist and 
\begin{align}
&F^{e}_{p}(\omega) = c |\omega|^{p}e^{-\omega^{2}/2} = c_{1}(s^{p}+ (-s)^{p} ) e^{s^{2}/2}, \\
&F^{o}_{p}(\omega) = d]\;sign(\omega) i |\omega|^{p}e^{-\omega^{2}/2} = c_{1}(s^{p}- (-s)^{p} ) e^{s^{2}/2},
\end{align}
where $i^{2} = -1$, $c_{1}$ is some complex number, and $c, d\neq 0$ are some real numbers. Also, 
$F^{ e}_{0}(\omega) = c e^{-\omega^{2}/2}$. 
\end{theorem}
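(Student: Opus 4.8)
The plan is to turn the statement into a first‑order ODE for the Fourier transform. Since $p>0$, the remarks following (17)--(19) give $\Lambda^{\theta}_{p}\in L_{1}(\RR)$ for $\theta=e,o$, so each transform $F^{\theta}_{p}(\omega)=\int_{-\infty}^{\infty}\Lambda^{\theta}_{p}(x)e^{-i\omega x}\,dx$ exists and, by Riemann--Lebesgue, is a bounded continuous function vanishing at $\pm\infty$. By Lemma 3.1 applied with $a=1$ and parameter $p+1$ in place of $p$ (or directly from (10) and (11), which give $(\Lambda^{\theta}_{p})''=-(p+1)\Lambda^{e}_{p+2}$-type identities), both $\Lambda^{e}_{p}$ and $\Lambda^{o}_{p}$ satisfy
$\ddot h(x)+x\dot h(x)+(p+1)h(x)=0$.
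When $p>0$ the functions $h$, $\dot h$, $\ddot h$ and $x\dot h$ all lie in $L_{1}(\RR)$ (again from (10)--(11) and (17)--(18)); in any case $h\in\mathcal S'(\RR)$, so the Fourier transform may be applied to the ODE.

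Applying the transform and using $\widehat{\dot h}(\omega)=i\omega\,\hat h(\omega)$, $\widehat{\ddot h}(\omega)=-\omega^{2}\hat h(\omega)$ and $\widehat{x\,g}(\omega)=i\frac{d}{d\omega}\hat g(\omega)$ (all valid in $\mathcal S'(\RR)$) converts the equation into the first‑order linear ODE $\omega\,\hat h'(\omega)=(p-\omega^{2})\,\hat h(\omega)$. On each of the half‑lines $\omega>0$ and $\omega<0$ this integrates to $\hat h(\omega)=K_{\pm}\,|\omega|^{p}e^{-\omega^{2}/2}$. Because $p>0$ the function $|\omega|^{p}e^{-\omega^{2}/2}$ is continuous at the origin, and since $\hat h$ is itself a continuous function there is no extra distribution supported at $\{0\}$; hence $F^{\theta}_{p}(\omega)=K_{+}\,\omega^{p}e^{-\omega^{2}/2}$ for $\omega\ge 0$ and $K_{-}\,|\omega|^{p}e^{-\omega^{2}/2}$ for $\omega\le 0$.

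The constants are then fixed by parity. Since $\Lambda^{e}_{p}$ is real and even, $F^{e}_{p}$ is real and even, which forces $K_{+}=K_{-}=:c\in\RR$; since $\Lambda^{o}_{p}$ is real and odd, $F^{o}_{p}$ is purely imaginary and odd, which forces $K_{-}=-K_{+}$ with $K_{+}=id$, $d\in\RR$. Neither $c$ nor $d$ vanishes, since $\Lambda^{e}_{p}(0)=1/\sqrt{2\pi}\neq 0$ and $\Lambda^{o}_{p}\not\equiv 0$, so a vanishing constant would force the transform, hence the function, to be identically zero. This yields $F^{e}_{p}(\omega)=c|\omega|^{p}e^{-\omega^{2}/2}$ and $F^{o}_{p}(\omega)=d\,\operatorname{sign}(\omega)\,i\,|\omega|^{p}e^{-\omega^{2}/2}$ with $c,d\neq 0$. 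The case $p=0$ is the classical Gaussian transform applied to (9), giving $F^{e}_{0}(\omega)=c\,e^{-\omega^{2}/2}$, consistent with setting $p=0$ above. The alternative "$s$"‑forms follow from the substitution $s=i\omega$, using $e^{s^{2}/2}=e^{-\omega^{2}/2}$, $(i\omega)^{p}+(-i\omega)^{p}=2\cos(\pi p/2)|\omega|^{p}$ and $(i\omega)^{p}-(-i\omega)^{p}=2i\operatorname{sign}(\omega)\sin(\pi p/2)|\omega|^{p}$ (fixed branch), which recasts both formulas with a single complex constant $c_{1}$.

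The main thing that needs care, rather than any deep difficulty, is twofold: first, the passage from the ODE for $h$ to the ODE for $\hat h$ should be carried out in $\mathcal S'(\RR)$, and one must exclude a distributional component of the solution supported at the origin --- which is exactly what the continuity of $\hat h$ (Riemann--Lebesgue, valid since $p>0\Rightarrow h\in L_{1}$) rules out; second, the equation $\omega\hat h'=(p-\omega^{2})\hat h$ only pins down $\hat h$ up to \emph{independent} constants on the two half‑lines, and it is the parity of $\Lambda^{\theta}_{p}$ that fuses them into the stated single‑constant forms. As an alternative to the ODE route one can compute $F^{e}_{p}$ by direct integration: expanding $\cos(\omega x)$ in $\int_{-\infty}^{\infty}|\omega|^{p}e^{-\omega^{2}/2}\cos(\omega x)\,d\omega$ term by term and evaluating the Gaussian moments $\int_{0}^{\infty}\omega^{p+2n}e^{-\omega^{2}/2}\,d\omega$ in terms of $\Gamma\!\left(\tfrac{p+2n+1}{2}\right)$, one recognizes, via $(2n)!=2^{2n}n!\,(1/2)_{n}$ and $\Gamma\!\left(\tfrac{p+1}{2}+n\right)=\Gamma\!\left(\tfrac{p+1}{2}\right)\left(\tfrac{p+1}{2}\right)_{n}$, the series (7) for $\Lambda^{e}_{p}$, which also identifies $c$ explicitly and confirms $c\neq 0$; the odd case is analogous with $\sin(\omega x)$ and series (8).
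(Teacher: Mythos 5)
Your argument is essentially the paper's own: both take the second-order ODE $\ddot h + x\dot h + (p+1)h = 0$ satisfied by $\Lambda^{\theta}_{p}$, Fourier-transform it into the first-order equation $sH'(s) = (s^{2}+p)H(s)$ (your $\omega\hat h' = (p-\omega^{2})\hat h$ is the same equation in the $\omega$ variable), integrate away from the singular point at the origin, and use the parity of $\Lambda^{\theta}_{p}$ to glue the two half-line solutions into the stated forms. Your added care about excluding a distributional component at the origin, the explicit nonvanishing of $c$ and $d$, and the alternative term-by-term Gaussian-moment computation are refinements rather than a different route, and the proof is correct.
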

\begin{proof} The existence of the Fourier transform $\mathcal{F}$ is guaranteed by the absolute integrability (as noted above) of functions $\Lambda^{e}_{p} (x)$ for $p > 0$, and by their differentiability. 

Let $\Lambda^{e}_{p} (x) = h(x)$. Then, for every $x$ it satisfies the equation 
\begin{equation} 
\ddot{h}(x) + x \dot{h}(x) + (p + 1)h(x) = 0 .
\end{equation}
Formulas (17) and (18) guarantee the existence of Fourier transforms of each 
summand in (31), and its convergence to zero as 
$|x| \rightarrow \infty$. By applying the Fourier transform to (31), we get 
\begin{equation}
s^{2} H (s) -\frac{d} {ds} [sH (s)] + (p + 1)H (s) = 0 , 
\end{equation}
here $H (s) = \mathcal{F} [h(t)] $ and $s = i\omega$, $\omega$ real. 

Here we used the fact that if 
$\mathcal{F} (g) = G(s)$, then 
$\mathcal{F} [g(t)t] = - \frac{dG(s)}{ds }$
provided 
$\int^{\infty}_{-\infty}tg(t)e^{- st }dt$
 converges uniformly in $s$ and 
$\int^{\infty}_{-\infty}  g(t)e^{- st }dt$  converges 
(see [14, 11.55a]). In our case, $g(t) = \dot{h}(t)$ and the conditions on convergence 
follow from (17), (18) and from the fact that $p > 0$. From (32) it follows 
that 
$\frac{H'(s)}{H(s)} = s + p/s$. Integrating this along $i\omega$ gives us 
\begin{equation}
H(s) = 2 c_{1}s^{p}e^{s^{2}/2}
.
\end{equation}
The path of integration should not include $0$, since we have a singular point 
there. Thus, if $s_{0} = i\omega_{0} $, for any $\omega_{0} > 0$, formula (33) holds for $s = j\omega$ 
with $\omega > \omega_{0}$. Since $H (s)$ must be real because $h(t)$ is an even function, 
$H (s) = c \omega^{p}e^{-\omega^{2}/2}$
for some real $\omega > 0$ and $c$ real. Now, since 
$H (-s) = H (s)$ 
for an even $h(x)$, we have 
$H (s) = c |\omega|^{p}e^{-\omega^{2}/2}$
for $s = i\omega$,  $\omega < 0$. This proves 
(29), since $F^{ e}_{p} (\omega) = H (s)$, and (30) can be proved in an entirely similar 
fashion. The last statement follows from the fact that $\Lambda^{e}_{0} (x)$ is the Gaussian 
function. 
\end{proof}

Now we are ready to state our main result. 
\section{Multiple Parameter Scale-Space}
\begin{theorem}
If $\{ g(x, \rho ), \rho  > 0 \}$ is a one-parametric family of kernels 
satisfying the scaling property (4), the stability condition (5), and

\noindent i) the strong monotonicity condition (2) if and only if 

\noindent ii) the weak monotonicity condition (3) if 

\[g(x, \rho ) = \alpha \rho^{ p+1} \Lambda^{e}_{p} (ax\rho ) + \beta  \rho^{ p+1} \Lambda^{o}_{p} (ax\rho ) 
\]
for some real $\alpha , \beta  , a$ and $p > 0$ or (in case $p = 0$)
\[ 
g(x, \rho ) = \alpha \rho \Lambda^{e}_{0} (ax\rho ) = \alpha G(ax, \rho ) , 
\]
where $\Lambda^{e}_{p} (x)$ and $\Lambda^{o}_{p} (x)$ are the one-parametric families of entire functions 
defined in (7) and (8), which belong to $L_{1} (\RR)$ for the indicated values of the 
parameter $p$. 

Moreover, 
\begin{equation}
\frac{\partial^{2n} G(x, \rho )}{\partial  x^{2n}}
 = c_{2n} \rho^{2n+1} 
\Lambda^{e}_{2n} (x\rho ) , 
\end{equation}
\begin{equation}
\frac{\partial^{2n+1} G(x, \rho )}{\partial  x^{2n+1}}
 = c_{2n+1} \rho^{2n+2} 
\Lambda^{0}_{2n} (x\rho ) , 
\end{equation}
where $c_{2n} = 
\frac{(-1)^{n}(2n)!}{2^{n} n!}$ , $c_{2n+1} = c_{2(n+1)}$ , and $G(x, \rho )$ is the Gaussian kernel 
defined in (1). 
\end{theorem}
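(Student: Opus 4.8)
## Proof proposal

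The plan is to split the argument into two parts: first, the formulas (34)–(35) relating derivatives of the Gaussian to the $\Lambda$-functions, which are essentially a restatement of the preliminary identities; second, the substantive characterization, namely that the listed kernels are exactly the one-parameter families satisfying (2), (4), (5) (equivalently (3), (4), (5)).

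For the formulas: I would start from $G(x,\rho) = \rho\,\Lambda^e_0(x\rho)$, which is (9) rescaled, and differentiate $2n$ times in $x$, pulling out a factor $\rho^{2n}$ from the chain rule and applying (14) to $\Lambda^e_0$ evaluated at $x\rho$. This gives (34) with the constant $c_{2n} = (-1)^n(2n)!/(2^n n!)$ read directly off (14). For (35), differentiate (34) once more in $x$ and apply (10) (with the index shift), or equivalently differentiate $G$ an odd number of times and use (15); either route produces $c_{2n+1}\rho^{2n+2}\Lambda^o_{2n}(x\rho)$ with $c_{2n+1} = c_{2(n+1)}$. These steps are routine once the chain-rule bookkeeping of the $\rho$-powers is done carefully.

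The main part is the characterization. The forward direction (these kernels satisfy (4) and (5)) is straightforward: (5) for $p>0$ is exactly the integrability statement recorded after (19), and (4) follows by substituting $kx$ and $\rho/k$ into the stated form and checking that the $\rho$-powers and the argument $ax\rho$ recombine, yielding $c(k,\rho) = k^{-(p+1)}$; one must also verify that (2) holds, which I would do by computing $\phi_\rho$ and $\phi_{xx}$ via the Fourier transform from Theorem 3.3 and checking the sign condition (this is where the scale-space axioms are genuinely used). The converse — that \emph{every} kernel satisfying (4) and (5) has this form — is the heart of the matter. Here the strategy is: scale invariance (4), differentiated in $k$ at $k=1$, forces $g$ to satisfy a first-order PDE in $(x,\rho)$ whose characteristics reduce $g(x,\rho)$ to $\rho^{q}\,\psi(x\rho)$ for some exponent $q$ and single-variable profile $\psi$; then the monotonicity/causality condition (2), together with translation invariance and linearity, forces $\psi$ to satisfy a second-order linear ODE of the form (20) (the causality condition is essentially a maximum-principle constraint that pins down the diffusion-type equation $\phi_\rho = \phi_{xx}$-like relation on the convolution, hence an ODE on the kernel after Fourier transform); Lemma 3.1 then identifies $\psi$ as a combination of $\Lambda^e_{p-1}$ and $\Lambda^o_{p-1}$, and Lemma 3.2 guarantees the parameters $a$ and $p$ are uniquely determined. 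The integrability condition (5) then restricts to $p>0$ via (17)–(18), with $p=0$ giving the pure Gaussian case.

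The step I expect to be the main obstacle is extracting the second-order ODE (20) from the causality condition (2): unlike the PDE coming from (4), which is a clean infinitesimal consequence of an exact functional equation, the condition (2) is an inequality, and deriving from it that $\phi_\rho$ and $\phi_{xx}$ must be \emph{proportional} (with the right $x$- and $\rho$-dependent coefficient) — rather than merely opposite in sign — requires showing that the family is rich enough (e.g. testing against a dense set of signals $f$, or against the $\delta$-function and its translates/derivatives) that the sign constraint forces an identity. This is the point where the interplay of all four retained axioms (translation invariance, linearity, causality, scale invariance) is indispensable, and it is where I would spend the most care, likely reducing to the Fourier side where (2) becomes a statement about $\widehat{g}(\omega,\rho)$ that, combined with the scaling form $\rho^q\psi(x\rho)$, leaves only the Gaussian-times-power profiles of Theorem 3.3.
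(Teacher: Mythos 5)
Your proposal follows essentially the same route as the paper: reduce $g$ to the form $\zeta(\rho)h(x\rho)$ via scale invariance, show that the sign condition (2), holding for all admissible signals, forces $\phi_{xx}$ and $\phi_{\rho}$ to be negatively proportional (this is exactly the paper's Lemma 4.2, an inner-product argument carried out directly in the spatial domain rather than on the Fourier side), and then use Lemmas 3.1 and 3.2 to solve the resulting ODE (20) and pin down $\zeta(\rho)=c\rho^{p+1}$ and $h=\alpha_{1}\Lambda^{e}_{p}+\beta_{1}\Lambda^{o}_{p}$, with (5) and (17)--(18) giving $p>0$. The one small inaccuracy is that the power law $\rho^{p+1}$ is not forced by scale invariance alone (which only yields an arbitrary prefactor $\zeta(\rho)$); in the paper it emerges from Lemma 3.2 applied to the $\rho$-dependent coefficients of the ODE (46), but since you invoke Lemma 3.2 for exactly that purpose the argument is unaffected.
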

\noindent \textbf{Remark}: We are not requiring the axiom of recursivity in Theorem 4.1.

\medskip
Before giving the proof of Theorem 4.1, we first need: 
\begin{lemma} Let $\mu$ and $\nu$ be non-zero vectors in a vector space $V$ , over the 
field of complex numbers, with a scalar product $\langle \cdot,\cdot\rangle$.Then, the statement: 

for any vector $f \in  V$ such that either 
$\langle f, \mu \rangle \neq 0$ or $\langle f,\nu\rangle \neq0$, we have 
$\indent \langle f, \mu \rangle \cdot \langle f, \nu \rangle< 0 $

\noindent is true if and only if there is a negative number $\alpha$  such that $\mu = \alpha \nu$.
\end{lemma}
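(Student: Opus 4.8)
The plan is to prove both directions of the equivalence directly. The ``if'' direction is the easy one: suppose $\mu = \alpha\nu$ with $\alpha < 0$. Then for any $f \in V$, $\langle f,\mu\rangle = \alpha\langle f,\nu\rangle$, so $\langle f,\mu\rangle\cdot\langle f,\nu\rangle = \alpha\,|\langle f,\nu\rangle|^{2}$. If either $\langle f,\mu\rangle \neq 0$ or $\langle f,\nu\rangle\neq 0$ then (since they differ by the nonzero scalar $\alpha$) $\langle f,\nu\rangle\neq 0$, so $|\langle f,\nu\rangle|^{2} > 0$ and the product is $\alpha$ times a positive real, hence a negative real. One small point to address: the inequality ``$<0$'' presupposes the product is real, and indeed $\alpha\,|\langle f,\nu\rangle|^2$ is real because $\alpha$ is real and $|\langle f,\nu\rangle|^2 \geq 0$; I would remark on this explicitly since $V$ is a complex space.

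For the ``only if'' direction, assume the stated property holds. First I would show $\mu$ and $\nu$ are linearly dependent. Suppose not; then by a Gram--Schmidt / nondegeneracy argument I can pick $f$ with $\langle f,\mu\rangle$ and $\langle f,\nu\rangle$ prescribed to be any pair of complex values I like — in particular $\langle f,\mu\rangle = 1$ and $\langle f,\nu\rangle = 1$, giving product $1 > 0$, a contradiction. (Concretely: the map $V \to \CC^2$, $f \mapsto (\langle f,\mu\rangle,\langle f,\nu\rangle)$ is linear, and if $\mu,\nu$ are independent its image is all of $\CC^2$, since a proper subspace of $\CC^2$ would force a linear relation among $\mu,\nu$ via the Riesz-type pairing.) Hence $\mu = \lambda\nu$ for some $\lambda \in \CC$, $\lambda \neq 0$ (as $\mu \neq 0$).

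It remains to show $\lambda$ is a negative real. Choose any $f$ with $\langle f,\nu\rangle \neq 0$ — possible since $\nu \neq 0$, e.g. $f = \nu$. Then $\langle f,\mu\rangle = \bar\lambda\langle f,\nu\rangle$ or $\lambda\langle f,\nu\rangle$ depending on the conjugate-linearity convention for $\langle\cdot,\cdot\rangle$; write it as $\mu = \lambda\nu$ so that $\langle f,\mu\rangle = \overline{\lambda}\,\langle f,\nu\rangle$ (I would fix the convention and be consistent). The hypothesis forces $\langle f,\mu\rangle\langle f,\nu\rangle = \overline{\lambda}\,|\langle f,\nu\rangle|^2 < 0$, so $\overline\lambda$ — hence $\lambda$ — must be a negative real number, and renaming $\alpha := \lambda < 0$ gives $\mu = \alpha\nu$.

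The only genuine obstacle is the surjectivity claim in the ``only if'' direction — that independence of $\mu,\nu$ lets us hit $(1,1) \in \CC^2$. This is immediate in finite dimensions and follows in general because $\mu,\nu$ span a $2$-dimensional subspace on which the scalar product, restricted appropriately, lets us solve for the needed $f$; alternatively one argues by contradiction that if no $f$ gives $(\langle f,\mu\rangle,\langle f,\nu\rangle)$ outside a line $\{(z,w): az+bw=0\}$, then $\langle f, \bar a\mu + \bar b\nu\rangle = 0$ for all $f$, forcing $\bar a\mu + \bar b\nu = 0$, contradicting independence. I would present this contradiction argument since it avoids any finite-dimensionality assumption on $V$.
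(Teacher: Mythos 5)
Your proof is correct in substance but reaches the key conclusion $\mu = \lambda\nu$ by a genuinely different route from the paper. The paper writes $\mu = \alpha\nu + \zeta$ with $\zeta$ perpendicular to $\nu$ and tests the hypothesis on the single vector $f = \zeta$: since $\langle \zeta,\mu\rangle = \langle\zeta,\zeta\rangle \neq 0$, the hypothesis would force $\langle\zeta,\mu\rangle\cdot\langle\zeta,\nu\rangle < 0$, which is impossible because $\langle\zeta,\nu\rangle = 0$; hence $\zeta = 0$. You instead argue that if $\mu,\nu$ were independent, the pairing map $f \mapsto (\langle f,\mu\rangle,\langle f,\nu\rangle)$ would be onto $\CC^2$, so some $f$ makes the product equal to $1$, contradicting the hypothesis. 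Both are sound; your surjectivity lemma (proved by the functional-annihilation argument, which correctly avoids any finite-dimensionality assumption) does more work to produce a witness that the paper's orthogonal decomposition hands over for free, namely the component of $\mu$ orthogonal to $\nu$. The final step, testing $f = \nu$ to force $\alpha < 0$, is identical in both proofs.

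One genuine quibble with your ``if'' direction: with $\mu = \alpha\nu$ and $\alpha$ real, $\langle f,\mu\rangle\cdot\langle f,\nu\rangle = \alpha\,\langle f,\nu\rangle^{2}$, not $\alpha\,|\langle f,\nu\rangle|^{2}$, and over $\CC$ the former need not be a negative real (take $f = i\nu$: the product is $-\alpha\|\nu\|^{4} > 0$). So the identity you wrote is false for a bilinear or sesquilinear product on a complex space, and in fact the lemma's ``if'' direction fails over $\CC$ as literally stated. The statement is really meant for real scalar products --- in the application $f_{1}$, $h$ and its derivatives are real-valued --- or else the product must be read as $\langle f,\mu\rangle\overline{\langle f,\nu\rangle}$, which is what your $|\cdot|^{2}$ tacitly assumes. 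The paper dismisses the converse as ``trivial'' and makes the same silent assumption, so this is a defect of the lemma's statement rather than of your argument, but you should state the convention explicitly rather than assert an identity that is wrong over $\CC$.
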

\begin{proof} Suppose that the statement is true, and $\mu = \alpha \nu + \zeta$ for some scalar $\alpha$  
and vector $\zeta \neq 0$ perpendicular to $\nu$.
 Then since $\langle  \mu,\zeta \rangle = \langle \zeta, \zeta \rangle \neq 0$ we have, 
by hypothesis, that
$
\langle \mu, \zeta \rangle \cdot \langle  \nu, \zeta \rangle < 0
$,
 which contradicts our assumption that 
$\nu$ is perpendicular to $\zeta$. Therefore, we must have $\mu = \alpha \nu$. Now let us take 
$f = \nu$. Obviously, $\langle f, \nu \rangle \neq 0$, and therefore,
$
\langle f, \nu \rangle \cdot \langle f, \mu \rangle = \alpha \langle \nu, \nu \rangle^{2} < 0
$
which implies that $\alpha  < 0$. The proof of the converse is trivial. \end{proof}

In the proof which follows we will assume, as a result of the scale invariance axiom, that $g(x, \rho )$ has the form 
\begin{equation}
g(x, \rho ) = \zeta (\rho )h(x\rho ) . 
\end{equation}
for some continuous $\zeta$ and $h$, so that the convolution of a horizontally scaled 
signal $f (kx)$ with the kernel $g(x, \rho )$ will be the same as the horizontally 
scaled convolution of the unscaled signal $f (x)$ with the kernel $g(x, \rho /k)$, 
times a constant. This assumption is more general that the assumption 
$g(x, \rho ) = \rho^{ 2} h(x\rho )$ made in \cite{k21y}, for instance, and slightly different from the 
assumption $g(x, \rho ) = \zeta(\rho )h(x\zeta (\rho ))$ made in \cite{k19y}. The reader will see below 
that our assumption is suitable for our purposes. 

\begin{proof}[Proof of Theorem 4.1] Let us write the expressions for $\phi (x, \rho )$ and its partial 
derivatives. 
\begin{align}
\phi(x,\rho) & = \zeta(\rho) \int^{\infty}_{-\infty} f(\nu) h(\rho(x-\nu))d\nu \notag \\
\phi_{x}(x,\rho) & = \zeta(\rho) \rho \int^{\infty}_{-\infty} f(\nu) \dot{h}(\rho(x-\nu))d\nu \\
\phi_{xx}(x,\rho) & = \zeta(\rho) \rho^{2} \int^{\infty}_{-\infty} f(\nu) \ddot{h}(\rho(x-\nu))d\nu \\
\phi_{p}(x,\rho) & =  \int^{\infty}_{-\infty} f(\nu) [ \dot{\zeta}(\rho)  h(\rho((x-\nu))+ \zeta(\rho) \dot{h}(\rho(x-\nu))]d\nu
\end{align}
By making the substitution $t = -\nu \rho$ , and defining $f_{1} (t) = f (\nu) = f (-t/\rho )$, 
he integrals (37), (38) and (39) can be expressed as follows: 
\begin{align}
\phi_{x}(x,\rho) & = \zeta(\rho) \int^{\infty}_{-\infty} f_{1}(t) \dot{h}(x\rho + t)dt\\
\phi_{xx}(x,\rho) & = \zeta(\rho) \rho \int^{\infty}_{-\infty} f_{1}(t) \ddot{h}(x\rho + t)dt \\
\phi_{p}(x,\rho) & =  \frac{1}{\rho} \int^{\infty}_{-\infty}  [ \dot{\zeta}(\rho)  h(x \rho + t)+ \zeta(\rho)\left(x + \frac{t}{\rho} \right) \dot{h}(x \rho + t)] f_{1}(t)dt
\end{align}
We let $\tau(t) = x\rho  +t$, then the strong monotonicity condition (2) is equivalent 
to the statement

Whenever either $\langle f_{1}, \ddot{h} \circ \tau \rangle \neq 0$ or $\langle f_{1}, \dot{\zeta} h \circ \tau + ]\tau \left( \zeta(\rho)/\rho \right) \dot{h}\circ \tau \rangle \neq 0$
\begin{equation} \text{then} \hspace{1em} \langle f_{1}, \ddot{h}\circ \tau \rangle \langle f_{1} \dot{\zeta}(\rho) h\circ \tau + \tau \left( \zeta(\rho) /\rho \right)\dot{h}\circ \tau \rangle <0,
\end{equation}
where the scalar product of functions $f_{1}$ and $f_{2}$ in $L_{2} (\RR)$ is given by 
$\langle f_{1},f_{2} = \int^{\infty}_{-\infty}f_{1}(t)f_{2}(t) dt$. %margin%
Let us denote $\mu = \dot{\zeta}(\rho)h\circ \tau + \tau \left(  \zeta(\rho) / \rho \right) \dot{h} \circ \tau$, and $\nu = \ddot{h}\circ \tau$.
Using this notation we may restate (43) as 
\begin{align}
\text{Whenever either }& \langle f_{1}, \mu \rangle \neq 0 \text{ \; or \; } \langle f_{1}, \nu \rangle \neq 0, \notag \\
\text{then }& \langle f_{1}, \mu \rangle \langle f_{1}, \nu \rangle < 0
\end{align}

From Lemma 4.2 this is equivalent to 
\begin{equation}
\nu \equiv - a^{2} \mu, \hspace{1em} \text{for some real number } a = a(\rho ),
\end{equation}
which is equivalent to the differential equation 
\begin{equation}
\ddot{h}(t) + a^{2}(\rho) \left[ \dot{\zeta}(\rho) h(t) + \frac{\zeta(\rho)}{\rho} t  \dot{h}(t)   \right] = 0.
\end{equation}
Equation (46) has to be satisfied for all $\rho  > 0$. From Lemma 3.2, it follows 
that 
\begin{equation}
\frac{a^{2}(\rho) \zeta(\rho)}{\rho} \text{ and } a^{2}(\rho)\dot{\zeta}(\rho) \text{ are constants.}
\end{equation}
Therefore, their ratio $\rho\dot{\zeta}(\rho )/\zeta(\rho )$ is a constant and there exists a number $p$ 
such that 
\begin{equation}
\frac{\rho \dot{\zeta}(\rho)}{\zeta(\rho)} = 1 + p \hspace{3em} (\text{or } \dot{\zeta}(\rho) = (1+ p) \frac{\zeta(\rho)}{\rho}).
\end{equation}
Solving this differential equation determines that 
\begin{equation}
\zeta (\rho ) = c \rho^{ p+1} ,
\end{equation}
for some constant $c \neq 0$. Since the coefficients of (46) are constants, there 
exists a constant $a \neq 0$ such that (46) can be rewritten as 
\begin{equation}
\ddot{h}(t) + a^{2} t \dot{h}(t) + a^{2} (p + 1)h(t) = 0 .
\end{equation}
By Lemma 3.1, h(t) is given by 
\begin{equation}
h(t) = \alpha_{ 1} \Lambda^{e}_{p} (at) + \beta_{ 1} \Lambda^{o}_{p} (at) , 
\end{equation}
and then from (36), (49) and (51) we find that the strong monotonicity 
condition (2) is equivalent to
\begin{equation}
g(x, \rho ) = \alpha  \rho^{ p+1} \Lambda^{e}_{p} (ax\rho ) + \beta  \rho^{ p+1} \Lambda^{o}_{p} (ax\rho ) ,
\end{equation}
where $\alpha  = c \alpha_{ 1} , \beta  = c \beta_{ 1}$. The necessity and sufficiency of condition (5) 
follows from the statements in Section 3, and (4) follows immediately from 
(52). In addition, it follows from (41), (42), (46) and (49) that 
\begin{equation}
\phi_{xx} (x, \rho ) = 
-\phi_{\rho}  (x, \rho )\rho^{3} .
\end{equation}
(This is another way to see that (2) is satisfied). 

It is clear that the the weak monotonicity condition (3) is satisfied by 
the given family of kernels.  \end{proof}
%colons added for consistency%%
\noindent \textbf{Remark}: We will show below that $\Lambda^{o}_{p} (x)$ and $\Lambda^{e}_{p} (x)$ are in some sense $p$th fractional derivatives of a Gaussian. We have already shown this for integer 
(non-fractional) values of $p$ in (34). 

\noindent \textbf{Remark}: The reason we are interested in the weak monotonicity condition 
is that it guarantees a possibility of constructing trees based on zero crossing 
curves only. 

\noindent \textbf{Remark}: In the proof of Theorem 4.1 we only required the kernel functions 
h to have two derivatives in x and one derivative in $\rho$. However, the kernel 
functions $g(x, \rho )$ which we derive are in fact infinitely differentiable in $x$ and 
$\rho$. 

Theorem 4.1 describes a maximal family of kernels which allow us to con- 
struct trees via methods described above and thus provide us with a \emph{maximal} 
set of invariants that can be used to classify signals. Since $g(x, \rho )$ in Theorem 4.1 is in $L_{1} (\RR) \subset L_{2} (\RR)$ we can normalize so that
$
\int^{\infty}_{-\infty}|g(x,\rho)|^{2} dx = 1.
$
By means of Parseval's identity, this is equivalent to
$
\int^{\infty}_{-\infty}|\mathcal{F}(g)(s,\rho)|^{2} ds = 1.
$
Then by making use of the Fourier transform idenities in (29) and (30) 
we obtain the relation 
\begin{equation}
\alpha^{ 2} - \beta^{ 2} = \frac{1}{(a\rho )^{p} \Gamma(p + 1/2)} .
\end{equation}
This means that our family $M$ of kernels can be described by means of 
parameters $a, \rho , p$, and one coefficient paramter. In certain situations we 
may want to consider only kernel functions which have either even or odd 
symmetry. Hence we define 
\begin{align}
g^{e}_{p} (x, \rho ) := \rho^{p+1} \Lambda^{e}_{p} (x\rho ) ,\hspace{1em} p \geq 0, \\ 
g^{o}_{p} (x, \rho ) := \rho^{p+1} \Lambda^{o}_{p} (x\rho ) ,\hspace{1em} p > 0 ,
\end{align}
and we have the following: 
\begin{cor} Let $g(x, \rho ), \rho  > 0$ be a parametric family of kernels satisfying conditions (2), (4) and (5) above, and let $g$ be either even or odd with 
respect to $x$. Then there exist three real numbers $c$, $a$, and $p$ such that 
\begin{align}
g(x, \rho ) = c \rho^{p+1} \Lambda^{e}_{p} (ax\rho ) ,\hspace{1em} p \geq 0 ,  \hspace{1em} \text{if } g(x, \rho ) \text{ is even}. \\ 
g(x, \rho ) = c \rho^{p+1} \Lambda^{o}_{p} (ax\rho ) ,\hspace{1em} p > 0 ,  \hspace{1em} \text{if } g(x, \rho ) \text{ is odd}. 
\end{align}
\end{cor}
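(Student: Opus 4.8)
The plan is to derive the Corollary directly from Theorem 4.1 by feeding the extra symmetry hypothesis into the general form $g(x,\rho) = \alpha\rho^{p+1}\Lambda^e_p(ax\rho) + \beta\rho^{p+1}\Lambda^o_p(ax\rho)$ already supplied by that theorem. First I would invoke Theorem 4.1: since $g$ satisfies (2), (4) and (5), there exist real numbers $\alpha,\beta,a$ and $p>0$ (or the degenerate case $p=0$, $\beta$ irrelevant) with $g(x,\rho) = \alpha\rho^{p+1}\Lambda^e_p(ax\rho) + \beta\rho^{p+1}\Lambda^o_p(ax\rho)$. Now recall from the definitions (7) and (8) that $\Lambda^e_p$ is an even function of its argument and $\Lambda^o_p$ is odd; both are entire, and neither is identically zero (their power series have nonzero leading coefficient $1/\sqrt{2\pi}$). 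So for fixed $\rho>0$, the map $x \mapsto g(x,\rho)$ decomposes into its even part $\alpha\rho^{p+1}\Lambda^e_p(ax\rho)$ and its odd part $\beta\rho^{p+1}\Lambda^o_p(ax\rho)$, and this decomposition is unique.

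Next I would use the symmetry hypothesis. If $g(\cdot,\rho)$ is even in $x$, then its odd part must vanish identically, i.e.\ $\beta\rho^{p+1}\Lambda^o_p(ax\rho) \equiv 0$ for all $x$; since $\rho^{p+1}\neq 0$, $\Lambda^o_p \not\equiv 0$, and $a\neq 0$ (required in Theorem 4.1 so that the kernel genuinely depends on $x$), this forces $\beta = 0$, leaving $g(x,\rho) = \alpha\rho^{p+1}\Lambda^e_p(ax\rho)$ with $p\geq 0$; set $c=\alpha$. Symmetrically, if $g(\cdot,\rho)$ is odd in $x$, then $g(0,\rho)=0$ and more generally its even part vanishes, so $\alpha\rho^{p+1}\Lambda^e_p(ax\rho)\equiv 0$; since $\Lambda^e_p(0)=1/\sqrt{2\pi}\neq 0$ this gives $\alpha=0$, leaving $g(x,\rho)=\beta\rho^{p+1}\Lambda^o_p(ax\rho)$; set $c=\beta$. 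In the odd case we additionally note $p>0$ is mandatory, since $\Lambda^o_0\notin L_1(\RR)$ (recorded in Section 3), so the degenerate $p=0$ branch of Theorem 4.1 — which is purely even, the Gaussian — cannot arise here; hence the stated range $p>0$ in the odd case and $p\geq 0$ in the even case.

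The argument is essentially bookkeeping: the only subtlety worth stating carefully is why $a\neq 0$ and why one may assume $g$ is not identically zero (a trivial kernel is excluded by (5) together with the nondegeneracy implicit in constructing trees, and also $a=0$ would make $g$ constant in $x$, violating stability (5) since a nonzero constant is not in $L_1(\RR)$). There is no real obstacle here; if anything, the one place to be a little careful is to phrase the even/odd decomposition argument so it applies for every fixed $\rho>0$ simultaneously with the same $\alpha,\beta,a,p$ — but this is immediate because Theorem 4.1 already produces a single quadruple $(\alpha,\beta,a,p)$ valid for all $\rho$. I would close by remarking that the normalization relation (54) specializes to $\alpha^2 = \left((a\rho)^p\Gamma(p+1/2)\right)^{-1}$ in the even case and $-\beta^2 = \left((a\rho)^p\Gamma(p+1/2)\right)^{-1}$ in the odd case, which is consistent only when the sign works out — a quick sanity check rather than a needed step.
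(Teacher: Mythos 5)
Your proposal is correct and follows exactly the route the paper intends: the Corollary is stated as an immediate consequence of Theorem 4.1 (the paper gives no separate proof), and your argument --- apply the theorem to get $g=\alpha\rho^{p+1}\Lambda^{e}_{p}(ax\rho)+\beta\rho^{p+1}\Lambda^{o}_{p}(ax\rho)$, then use uniqueness of the even/odd decomposition together with $\Lambda^{o}_{p}\not\equiv 0$, $\Lambda^{e}_{p}(0)\neq 0$, and $a\neq 0$ to force $\beta=0$ or $\alpha=0$, with $p>0$ in the odd case because $\Lambda^{o}_{0}\notin L_{1}(\RR)$ --- is precisely the intended bookkeeping. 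Your closing aside about the normalization (54) is rightly flagged as inessential (in the odd case it would read $-\beta^{2}>0$, reflecting a sign issue in (54) itself rather than in your proof), so it is good that nothing depends on it.
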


As we showed in Section 3 the kernels $g^{\theta}_{p} (x, \rho ) = \rho^{ p+1} \Lambda^{\theta}_{p} (x\rho ), \theta = e, o $ are not stable for $p < 0$. Moreover, for $p < -1$ the convolution $g^{\theta}_{p}*f $ may 
not exist for an admissible signal because $g_{p} (x, \rho )$ no longer converges to 
zero as $x \rightarrow \infty$ . However, if $f (x)$ is a transient signal, i.e. a signal which is 
zero outside of some finite interval $[-T , T ]$, then convolutions $g_{p}*f$ are 
well defined for any real value $p$ and any piecewise continuous signal $f$ 
(and even for any signal $f$ integrable on $[-T , T ]$). An exact analogue of 
Theorem 4.1 (without the stability condition (5) and without the restriction 
$p > 0$) therefore applies for transient signals. 

Let us discuss the one-parametric families of trees $TT^{e}_{p}(f )$ and $TW^{e}_{p}(f )$. 
We have seen before that $TT^{ e}_{ 2n} (f ) = T T^{ e}_{0} (f^{ (2n)} )$ and $T W^{ e}_{2n} (f ) = T W ^{e}_{0} (f ^{(2n)} )$. 
Since the trees for $f ^{(2n-2)}$ and for $f ^{(2n)}$ are obviously very different for most 
functions $f$ , there are points $p$ between $2n-2$  and $2n$ at which the structures 
of trees $TW^{e}_{p}(f )$ and $TT^{e}_{p}(f )$ change. In fact, trees $TT^{e}_{p}(f )$ and $TW^{e}_{p}(f )$, 
in general, depend continuously on $p$, and such discontinuity in structure 
occurs only for a discrete set of $p$'s (see \cite{k9y}). Also, see \cite{k4y} for interesting 
results in this direction. 

A more general approach can be taken if we consider contours 
$\frac{\partial^{k} \phi (x,\rho)}{\partial x^{k}} = c$
where $c$ is an arbitrary number and $\phi (x, \rho )$ is defined as a convolution of 
$f (x)$ and $\alpha \Lambda^{e}_{p} (x) + \beta  \Lambda^{o}_{p} (x)$. Obviously such contours depend on the parameters $\alpha , \beta  , c$ and $p$, and we will study the change in the corresponding tree 
structure as these parameters vary. We will use Morse Theory to study these 
bifurcations and also study many useful properties of the convolutions with 
our kernels and compute Fourier transforms of our kernels. This allows us 
to efficiently implement our methods on computers.

\section{Properties of Convolution Kernels}
Throughout this section we will suppose that any function $\chi^{\theta}$ (where $\chi$ may 
be any of $F, f , \Lambda , \phi , g$) is defined for $p 
\geq 0$ when $\theta = e$, and defined for $p > 0$ 
when $\theta = o$. We will also assume that $k, l, m, n$ are integers and  $p, q$ are 
nonnegative real numbers. 

As before, let $g^{theta}_p = \rho^{p+1} \Lambda^{\theta}_{p} (x\rho )$ where $\theta = e,o$ . Then, in order to be able to compute the convolution $\phi^{\theta}_{p} (x, \rho ) = f (x) *g (x, \rho)$ more efficiently, we need to know its Fourier transform. This will allow us to perform computations in 
the frequency domain. Another question closely related to this is a question 
of finding the limiting function $\phi^{\theta}_{p} (x, \rho )$ as $\rho  \rightarrow \infty $. It is well known that for the Gaussian $g^{e}_{ 0} (x, \rho )$, the Fourier transform is 
$ce^{s^{2} /\rho ^{2}}$ where $c$ is some constant. As $\rho \rightarrow \infty$, $ce^{s^{2} /\rho ^{2}} \rightarrow c$ ; thus, when we perform the convolution, 
we get the same signal $f (x)$ in the limit (this is due to the fact that the $\delta$
 - function has the Fourier transform equal to 1). Therefore, the limiting 
value for $\phi^{\theta}_{0} (x, \rho )$ as $\rho \rightarrow \infty$  is the function $f (x)$ itself, i.e., as $\rho$  increases, the complexity of the convolution $\phi^{\theta}_{0} (x, \rho )$ increases until we come to a signal $f (x) = \phi 0 (x, \infty )$ in the limit. It turns out that if $n$ is an integer, the limiting 
function for $\phi^{\theta}_{n} (x, \rho )$ as $\rho \rightarrow \infty$  is the $n$th order derivative of $f (x)$, where $\theta$ is odd or even, depending on whether $n$ is odd or even. 

We will generalize this simple property for every $p$ by finding the limit 
$\phi^{\theta}_{p} (x, \rho )$ as $\rho \rightarrow \infty$  , proving its existence and showing that it is in some sense a fractional pth order derivative of the signal. We will also consider 
here a number of very useful properties of convolutions with $g^{\theta}_p (x, \rho )$. 

\noindent \textbf{Remark}: Since the trees of the signals do not change if we scale it or 
multiply it by a constant, we will determine Fourier transforms only up to 
a constant factor and will not spend any time on determining these factors 
precisely. 

In what follows, a function $f (x)$ and its Fourier transform 
$\mathcal{F} (f )$ will be called a \emph{Fourier pair} if they are connected by the usual direct and inverse 
Fourier transform formulas. In this case, we will write $f \sim \mathcal{F} (f )$. In gen- 
eral, the direct Fourier transform formula does not imply the inverse, i.e., if 
$\mathcal{F} (f ) = F (s)$, it is not generally true that $f (x) = \mathcal{F}^{-1}\left(F(s)\right)$. 

Let us recall a definition of fractional derivatives. Let $q < 0$, then \cite{k6y} 
\begin{equation}
\frac{d^{q} f (x)}{ dx^{q}} = 
\frac{1} {\Gamma(-q)} \int^{x}_{0} \frac{f (\rho ) dy} {(x -\rho )^{q+1}} = 
\frac{1} {\Gamma(-q)} \int^{x}_{0} \frac{f (x-\rho ) dy} {\rho ^{q+1}}
% Removed "=." eq59 c
\end{equation}
This definition of fractional derivatives belongs to Liouville. We propose a 
related but slightly different definition. Let 
$\mathcal{F} (f ) = F (s)$; then for $s = j\omega$,
\begin{equation}
\frac{d^{p} f (x)}{dx^{p}} = f^{(p)} (x) = 
\frac{c}{2\pi j} \int^{\infty}_{-\infty}  s^{p} F (s)e^{sx} ds
,\end{equation}
provided the integral converges. We put a constant real number in (60) as 
usual to emphasize that we are only interested in functions up to a scalar 
factor. In what follows, we will use the following simple fact from the theory 
of Fourier transforms (see \cite{k3y}: 

\begin{prop} If a function $f (x)$ is continuously differentiable with $f ^{(n)} (x) \in  L_{1} (\RR)$ for all $n, 0 \leq n \leq m$, then for some $c > 0$, 
\begin{equation}
\left| \mathcal{F}\left(f(x) \right)(s)\right| = |F(s)| \leq \frac{c}{|s^{m}|}
.\end{equation}
\end{prop}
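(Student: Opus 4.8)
If $f$ is continuously differentiable with $f^{(n)} \in L_1(\mathbb{R})$ for all $n$ with $0 \le n \le m$, then $|\mathcal{F}(f)(s)| = |F(s)| \le c/|s^m|$ for some $c > 0$.

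The plan is to exploit the standard interplay between differentiation and the Fourier transform: differentiation in the time domain corresponds to multiplication by $s$ (up to sign and constant) in the frequency domain, combined with the elementary Riemann--Lebesgue bound that the Fourier transform of an $L_1$ function is bounded. First I would invoke integration by parts. Since $f \in L_1(\mathbb{R})$ and $f$ is continuously differentiable, one shows that $f(x) \to 0$ as $|x| \to \infty$ --- indeed $f(x) = f(0) + \int_0^x f'(t)\,dt$ has a limit at $\pm\infty$ because $f' \in L_1$, and that limit must be $0$ since $f$ itself is integrable. Hence the boundary terms in integration by parts vanish, and one obtains the identity $\mathcal{F}(f')(s) = s\,\mathcal{F}(f)(s)$ (with the convention $s = j\omega$ used in the paper; constants are irrelevant per the remark on working up to a scalar factor). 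Iterating this $m$ times --- which is legitimate precisely because $f^{(n)} \in L_1(\mathbb{R})$ for every $n \le m$, so each successive function is integrable, continuously differentiable for $n < m$, and decays at infinity --- gives $\mathcal{F}(f^{(m)})(s) = s^m\,\mathcal{F}(f)(s)$.

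Next I would apply the trivial estimate $|\mathcal{F}(g)(s)| \le \int_{-\infty}^{\infty} |g(t)|\,dt = \|g\|_{L_1}$, valid for any $g \in L_1(\mathbb{R})$ and any $s$ on the imaginary axis (indeed for any $s$ with $\mathrm{Re}\,s = 0$). Taking $g = f^{(m)}$, which lies in $L_1(\mathbb{R})$ by hypothesis, yields $|s^m\,\mathcal{F}(f)(s)| = |\mathcal{F}(f^{(m)})(s)| \le \|f^{(m)}\|_{L_1} =: c$. Dividing by $|s^m|$ (for $s \neq 0$) gives exactly $|F(s)| \le c/|s^m|$, as claimed. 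Since the paper explicitly cites \cite{k3y} for this fact, the write-up can be brief.

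The only point requiring any care --- and hence the ``main obstacle,'' though it is mild --- is justifying the vanishing of boundary terms at each stage of the iterated integration by parts: one must confirm that $f^{(n)}(x) \to 0$ as $|x| \to \infty$ for each $n$ in the relevant range, not merely for $f$ itself. This follows inductively from the same argument given above applied to $f^{(n)}$ in place of $f$: since $f^{(n)} \in L_1$ and $f^{(n+1)} \in L_1$ (for $n+1 \le m$), the function $f^{(n)}$ has a limit at $\pm\infty$ which must be zero by integrability. No other subtlety arises, and the uniform-convergence hypotheses needed to differentiate under the integral sign elsewhere in the paper are not needed here, since the argument is purely one of integration by parts plus the absolute bound.
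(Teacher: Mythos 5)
Your proof is correct: the iterated integration by parts (with the boundary terms killed by the observation that a function in $L_{1}(\RR)$ whose derivative is also in $L_{1}(\RR)$ must tend to zero at $\pm\infty$) combined with the trivial bound $|\mathcal{F}(g)(s)|\leq \|g\|_{L_{1}}$ is exactly the standard argument for this fact. The paper itself offers no proof of Proposition 5.1, citing \cite{k3y} instead, and your write-up supplies precisely the argument that reference would give, so there is nothing to compare beyond noting that your version is complete and self-contained.
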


We give a motivation for our definition as follows: let $\mu_{\alpha}(t) = t^{-\alpha }$ for $t > 0$ and 
$\mu_{\alpha}(t) = 0$ for $t \leq 0$ where $0 < \alpha  < 1$; then 
$\mathcal{F} \left( \mu_{\alpha}(t)\right)(s) = 
s^{\alpha -1} \Gamma(1-\alpha )$.
Therefore, by the convolution theorem we have (supposing certain convergence properties are satisfied for every $f (x)$ and $p$): 
\begin{align*}
f^{(p)}(x)  = \mathcal{F}^{-1}\left(cs^{p}F(s) \right) &= \\
 \mathcal{F}^{-1}\left( cs^{p} \right)*f(x) &= c \mu_{p+1} * f(x) = c \int_{0}^{\infty} \frac{f(x-\rho)}{\rho^{p+1}} dy
.\end{align*}

If, in addition, $f (\rho ) = 0$ for $\rho  < 0$ then $f^{ (p)} (x) = c \int_{0}^{\infty} \frac{f(x-\rho)}{\rho^{p+1}} dy$
and this coincides with the classical definition (59). 
We need to establish the range of applicability of definition (60) and 
show that it extends the usual definition of the derivative when p is an 
integer. Our purpose here is not to develop a fractional calculus, but to use 
some of its concepts to explain our results and their applications. As usual, 
we introduce the equivalence relation equating a function $f (x)$ with $cf (x)$ 
where $c$ is a constant not equal to zero. 
\begin{theorem}
If a function $f (x)$ satisfies the conditions of Proposition 5.1 
with $m > p + 1$, then 
\begin{itemize}
\item[(i)] formula (60) gives us a wel l defined $f^{ (p)} (x)$. 
\item[(ii)] Also, if $xf (x)$ is in $L_{1} (\RR)$, then 
\begin{equation}
cs^{p} F (s) = \mathcal{F}\left( f (p) (x)\right), \hspace{2em} \mathcal{F}\left(cs^{p}F(s)\right) = f ^{(p)} (x). 
\end{equation} 
\item[(iii)] Formula (60) gives us the usual derivative if $p$ is an integer. 
\item[(iv)] If $p + q + 1 < m$, then 
$\frac{d^{p}}{dx^{p}} \left(\frac{d^{q}}{dx^{q}} f (x)\right)= 
\frac{d^{p+q }}{dx^{(p+q)}} f (x).$ 
\end{itemize}
\end{theorem}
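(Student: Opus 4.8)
The plan is to handle the four parts essentially in the stated order, since each later part leans on the machinery set up in the earlier ones. For part (i), the point is pure convergence: the defining integral in (60) is $\frac{c}{2\pi j}\int_{-\infty}^{\infty} s^{p}F(s)e^{sx}\,ds$ with $s=j\omega$, so $|s^{p}F(s)|=|\omega|^{p}|F(s)|$, and Proposition 5.1 gives $|F(s)|\le c/|s^{m}|=c/|\omega|^{m}$. Thus the integrand is dominated by $|\omega|^{p-m}$ for large $|\omega|$; since $m>p+1$ we have $p-m<-1$, so the tail converges absolutely. Near $\omega=0$ there is no problem because $p\ge 0$ and $F$ is continuous (indeed bounded, being the transform of an $L_1$ function). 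So (i) is just this two-line domination estimate. I would write it out carefully, noting that continuity of $F^{(p)}(x)$ in $x$ also follows by dominated convergence since the dominating function is independent of $x$.

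For part (ii), I would use the extra hypothesis $xf(x)\in L_1(\mathbb R)$ to justify differentiating under the integral sign, exactly in the style already used in the proof of Theorem 3.3 (the $\mathcal F[g(t)t]=-\,dG/ds$ identity): $xf(x)\in L_1$ gives that $F(s)$ is $C^1$ and its derivative is the transform of $-xf(x)$, which in turn lets one integrate by parts / manipulate to show that $f^{(p)}(x)$ as defined really has $cs^pF(s)$ as its Fourier transform, and that applying the inverse transform to $cs^pF(s)$ returns $f^{(p)}(x)$ — i.e. the pair $f^{(p)}\sim cs^pF$ is a genuine Fourier pair in the sense defined just before Proposition 5.1. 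Part (iii) is then essentially a corollary of (ii): when $p=n$ is a nonnegative integer, $cs^nF(s)$ is (up to the constant $c=j^{-n}$ absorbed into the equivalence) the transform of $f^{(n)}(x)$ by the standard differentiation rule, and since $f^{(n)}\in L_1$ by the Proposition 5.1 hypothesis with $m>p+1$, the inverse transform recovers $f^{(n)}$; so (60) and the classical $n$-th derivative agree up to the scalar factor we have agreed to ignore.

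For part (iv), the natural route is the Fourier side. By (ii) applied first with exponent $q$ (legitimate since $q+1<p+q+1<m$), we get $\mathcal F\big(f^{(q)}\big)=cs^qF(s)$. To iterate, I need $f^{(q)}$ itself to satisfy the hypotheses of Proposition 5.1 with a margin large enough to apply (60) with exponent $p$: concretely I need $|\,\mathcal F(f^{(q)})(s)\,|=|s^qF(s)|\le c/|s^{m-q}|$ with $m-q>p+1$, which is exactly the assumption $p+q+1<m$. Then $\mathcal F\big((f^{(q)})^{(p)}\big)=cs^p\cdot s^qF(s)=cs^{p+q}F(s)=\mathcal F\big(f^{(p+q)}\big)$, and since both sides are continuous and (by the tail estimates) have transforms in $L_1$, inverting gives $(f^{(q)})^{(p)}=f^{(p+q)}$ up to the usual constant. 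The main obstacle — and the step I would be most careful about — is exactly this bookkeeping in (iv): establishing that $f^{(q)}$ inherits enough differentiability and decay to be a legitimate input to definition (60) again, rather than just a formally defined function. One must check that $f^{(q)}$ is continuously differentiable the requisite number of times and that all relevant derivatives lie in $L_1$, which requires translating the frequency-side decay $|s^qF(s)|\le c/|s^{m-q}|$ back into a time-side smoothness statement; the cleanest way is to stay in the frequency domain throughout and only invert at the very end, invoking the Fourier-pair conclusion of (ii) to pass between the two pictures without ever needing an independent time-domain regularity argument for $f^{(q)}$.
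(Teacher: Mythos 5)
Your proposal is correct and follows essentially the same route as the paper: (i) from the decay bound (61) with $m>p+1$, (ii) from the $C^1$ and $L_1$ properties of $s^pF(s)$ plus the Fourier inversion theorem, (iii) as the classical differentiation rule, and (iv) by composing multipliers on the frequency side. The paper's own proof is just a far terser version of the same argument (e.g.\ part (iv) is dismissed as "follows immediately from (ii) and (60)"), so your extra bookkeeping about $f^{(q)}$ inheriting the hypotheses is a welcome elaboration rather than a divergence.
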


\begin{proof}
(i) Follows immediately from (61). 

\noindent (ii) If $xf (x)$ is in $L_{1} (\RR)$, then $F (s)$ is continuously differentiable (see \cite{k3y}). 
Therefore, $s^{p} F (s)$ is continuously differentiable. It is also in $L_{1} (\RR)$ as is seen 
from (61) and the condition $m > p + 1$. Since $2\pi i f^{ (p)} (-x) = \mathcal{F}\left(cs^{p} F (s)\right)$
by (60), we can now apply the theorem on the inversion of the Fourier transform \cite{k3y}, and conclude that 
$ \mathcal{F}\left(f^{(p)}(x) \right) = cs^{p} F (s)$ and $f ^{(p)} (x) = \mathcal{F}^{-1}\left(cs^{p} F (s)\right)$.

\noindent (iii) If p is an integer, the assertion of (iii) is a very well known fact. 

\noindent (iv) Follows immediately from (ii) and (60).
\end{proof}

\noindent \textbf{Remark}: Formulas (33) and Theorem 5.2 imply that the linear combination 
$h_{p} (x) := \alpha \Lambda^{e}_{p} (x) + \beta  \Lambda^{o}_{p} (x)$ is actually the $p$th fractional derivative of $c_{1} G(x)$, which is the Gaussian function multiplied by some complex constant $c_{1}$. 

It will turn out that the limit function for $f (x) *h(x\rho )$ as $\rho\rightarrow \infty$  is 
$f ^{(p)} (x)$. This is a very important generalization of the formula for $p = 0$. 
This property follows from the following theorem. Before we state Theo- 
rem 5.4, we need a preliminary lemma. 

\begin{lemma} Let $f \in  L_{2} (\RR)$ be a continuously differentiable function, then 
\begin{itemize}
\item[(a)] $\phi^{\theta}_{p} (x, \rho ) = f (x) * \rho ^{ p+1 }\Lambda^{e}_{p} (x\rho ),$ $\theta = e, o$, is well defined and infinitely differentiable in $x$ and $\rho$. 

\item[(b)] The following functions constitute Fourier pairs. 
\begin{eqnarray}
g^{e}_{p} (x, \rho ) = \rho^{ p+1} \Lambda^{e}_{p} (x\rho )\sim c(\rho ) \left(s^{p} + (-s) ^{p} \right)
e^{s ^{2} /(2\rho ^{2} ) }
,\\
g^{o}_{p} (x, \rho ) = \rho^{ p+1} \Lambda^{o}_{p} (x\rho )\sim c(\rho ) \left(s^{p} - (-s) ^{p} \right)
e^{s ^{2} /(2\rho ^{2} ) }
,\\ 
\phi^{e}_{p}(x,\rho) \sim  c(\rho ) \left(s^{p} + (-s) ^{p} \right) e^{s ^{2} /(2\rho ^{2} ) } F(s)
,\\
\phi^{o}_{p}(x,\rho) \sim  c(\rho ) \left(s^{p} - (-s) ^{p} \right) e^{s ^{2} /(2\rho ^{2} ) } F(s)
,\end{eqnarray} %typo lambda o not lambda 0 -LukeO
where $F (s) =\mathcal{F}\left( f(x) \right)$, $s = i\omega$, and $c(\rho )$ is a constant depending on $\rho$. 
\end{itemize}

\end{lemma}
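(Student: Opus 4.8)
My plan is to prove part (a) first and then derive part (b) from it, with the Fourier-transform identities of Theorem 3.3 doing the heavy lifting in (b). For part (a), the essential point is that $f \in L_2(\RR)$ and $g^\theta_p(x,\rho) = \rho^{p+1}\Lambda^\theta_p(x\rho) \in L_1(\RR)$ for $p \ge 0$ (even case) and $p > 0$ (odd case), as established in Section 3 from the asymptotics (17), (18); hence $f * g^\theta_p$ is well defined by Young's inequality (convolution of an $L_2$ function with an $L_1$ function lies in $L_2$), and in fact the integral converges absolutely for every fixed $x$. For infinite differentiability in $x$, I would differentiate under the integral sign, using that $\partial_x^k[\rho^{p+1}\Lambda^\theta_p(x\rho)] = \rho^{p+1+k}(\Lambda^\theta_p)^{(k)}(x\rho)$ and that each derivative $(\Lambda^\theta_p)^{(k)}$ is again (a constant multiple of) some $\Lambda^{\theta'}_{p+k}$ with $p+k > 0$ by (12)--(13), hence in $L_1(\RR)$; so the differentiated integrands are dominated by $|f(\nu)| \cdot \sup$-type bounds that are integrable, and dominated convergence legitimizes the interchange. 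Differentiability in $\rho$ is analogous once one notes that $\partial_\rho$ of the kernel produces terms of the form $\rho^{p}\Lambda^\theta_p(x\rho)$ and $\rho^{p+1}\cdot x\cdot (\Lambda^\theta_p)'(x\rho)$; the factor $x$ is absorbed because $x\Lambda^\theta_p(x\rho)$ decays like $x^{-p}$, still integrable for $p > 1$, and for $0 < p \le 1$ one uses the relation $t\dot h(t)$ appearing in the ODE (20) to re-express it in terms of $\ddot h$ and $h$, both of which are integrable.

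For part (b), the first two Fourier pairs (65) and (66) follow directly from Theorem 3.3: that theorem gives $\mathcal F[\Lambda^e_p](\omega) = c|\omega|^p e^{-\omega^2/2} = c_1(s^p+(-s)^p)e^{s^2/2}$ and the analogous odd formula, and then the scaling rule $\mathcal F[h(\rho\,\cdot)](\omega) = \rho^{-1}\mathcal F[h](\omega/\rho)$ combined with the prefactor $\rho^{p+1}$ produces the stated $c(\rho)(s^p \pm (-s)^p)e^{s^2/(2\rho^2)}$, with $c(\rho)$ collecting all the $\rho$-powers and constants. One must check that these are genuine \emph{Fourier pairs}, i.e.\ that the inverse transform also holds; since $g^\theta_p \in L_1(\RR)$ and is smooth with all derivatives integrable (by part (a) applied with $f = \delta$-like reasoning, or more honestly by Proposition 5.1 giving rapid decay of the transform), and since the transform $c(\rho)(s^p\pm(-s)^p)e^{s^2/(2\rho^2)}$ is itself integrable against $e^{s^2/(2\rho^2)}$ having the wrong sign — wait, the exponent is $e^{-\omega^2/2}$ in the $\omega$ variable, decaying — the Fourier inversion theorem applies. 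Then (67) and (68) follow from (65), (66) by the convolution theorem: $\mathcal F[f * g^\theta_p] = \mathcal F[f]\cdot \mathcal F[g^\theta_p] = F(s)\cdot c(\rho)(s^p\pm(-s)^p)e^{s^2/(2\rho^2)}$, valid because $f \in L_2$ and $g^\theta_p \in L_1$ so the convolution theorem holds in the $L_2$ sense, and the product is again in $L_1 \cap L_2$ so it is a true Fourier pair.

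The main obstacle I anticipate is bookkeeping the low-$p$ cases carefully in part (a): when $0 < p \le 1$ the naive domination for the $\rho$-derivative fails because $x\Lambda^o_p(x\rho)$ or $x\Lambda^e_p(x\rho)$ is only borderline integrable, and one genuinely needs to invoke the differential equation (20)/(50) satisfied by $h = \Lambda^\theta_{p-1}$ (or the recurrences (10)--(13)) to trade the offending $t\dot h(t)$ term for a combination of $\ddot h(t)$ and $h(t)$, both of which sit comfortably in $L_1$. A secondary subtlety is making the "Fourier pair" claim airtight rather than merely asserting the forward transform: I would lean on Proposition 5.1 to guarantee that $F(s)$ decays fast enough (since $f$ is assumed continuously differentiable, though one may want $f^{(n)} \in L_1$ too, which should be folded into the hypotheses or noted) so that the products in (67)--(68) are integrable and the inversion theorem of \cite{k3y} applies cleanly. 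The rest is routine manipulation of Gaussians and power functions.
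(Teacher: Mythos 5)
Your proposal is correct and follows essentially the same route as the paper, which disposes of (a) by citing the theorem on differentiation of improper integrals with respect to a parameter and of (b) by citing Theorem 3.3 together with the standard scaling and convolution rules; you simply fill in the details (Young/Cauchy--Schwarz for well-definedness, the recurrences (10)--(13) and asymptotics (17)--(18) for the dominations, and the inversion theorem for the Fourier-pair claim) that the paper leaves implicit. The only caveat is that your domination for $f\in L_{2}(\RR)$ should be phrased via Cauchy--Schwarz against an $L_{2}$ majorant of the differentiated kernel rather than an $L_{1}$ bound on $|f|$ alone, but this does not change the argument.
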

\begin{proof}
(a) trivially follows from the theorem on differentiation of improper integrals by a parameter \cite{k14y,k15y}. (b) follows from Theorem 3.3. 
\end{proof}

\begin{theorem} If a function $f (x)$ satisfies the conditions of Proposition 5.1 with $m > p + 1$, then 
\begin{itemize}
\item[(i)] Functions $\phi^{e}_{p} (x, \rho )$, $\theta = e, o$ converge uniformly as $\rho \rightarrow \infty$  to a continuous function $f^{\theta}_{p} (x)$ in $C^{ k} (\RR)$ if $k < m - p - 1$. 
\item[(ii)] If $xf (x) \in  L_{1} (\RR)$, then for any $k, 0 < k < m - p - 1$ functions 
$\frac{d^{k}} {\phi^{ e}_{p} (x,\rho )}{dx^{k}}$, $\theta = e, o$ converge uniformly to 
$\frac{d^{k}} {dx^{k}} f^{\theta} _{p} (x)$ as $\rho \rightarrow \infty$. 

\item[(iii)] For the functions $f^{\theta}_{p} (x)$, $\theta = e, o$ we have the following Fourier 
correspondence: 
\begin{eqnarray*}
f^{e}_{p})(x) \sim  c(\rho ) \left(s^{p} + (-s) ^{p} \right) F(s)
,\\
f^{o}_{p}(x) \sim  c(\rho ) \left(s^{p} - (-s) ^{p} \right) F(s),
\end{eqnarray*}
where $F (s) = \mathcal{F}\left(f (x)\right)$.
 
\item[(iv)] Functions $\phi^{e}_{p} (x, \rho )$, $\theta = e, o$ uniformly converge as $\rho\rightarrow 0$ to an identically zero function. 

\item[(v)] Also, for every $k, 0 < k < m - p - 1$, functions 
$ \frac{d^{k} \phi^{\theta}_{p} (x,\rho )} %typo e vs theta -LukeO
{dx^{k}}$ , $\theta = e, o$ 
also converge uniformly to zero.

\item[(vi)] For $0 \leq k < m - p - 1$, $\theta = e, o$ we have 
\[
\lim_{\rho  \rightarrow \rho_{0} }
\frac{d^{k} \phi^{\theta}_{p} (x, \rho ) }
{dx^{k}} = 
\frac{d^{k} \phi^{\theta}_{p} (x, \rho_{0} ) }
{dx^{k}}, 
\]
where $\rho_{0} \geq 0$, and the limit is uniform with respect to $x$. 
\end{itemize}
\end{theorem}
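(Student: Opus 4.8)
The plan is to push everything onto the Fourier side, where Lemma 5.3(b) provides, up to the customary constant and with $s=i\omega$, the representation
\begin{equation*}
\frac{\partial^{k}}{\partial x^{k}}\phi^{\theta}_{p}(x,\rho)=\frac{c}{2\pi}\int_{-\infty}^{\infty}(i\omega)^{k}\bigl(s^{p}\pm(-s)^{p}\bigr)\,e^{-\omega^{2}/(2\rho^{2})}\,F(\omega)\,e^{i\omega x}\,d\omega ,\qquad \theta=e\ (+),\ \theta=o\ (-),
\end{equation*}
valid for each fixed $\rho>0$ because the factor $e^{s^{2}/(2\rho^{2})}=e^{-\omega^{2}/(2\rho^{2})}$ makes the integrand (and every $x$-derivative of it) super-integrable, so differentiation under the integral is legitimate. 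Everything then rests on one dominating bound, available uniformly in $\rho$: Proposition 5.1 gives $|F(\omega)|\le c|\omega|^{-m}$ for large $|\omega|$, and since $|s^{p}\pm(-s)^{p}|\le 2|\omega|^{p}$ and $e^{-\omega^{2}/(2\rho^{2})}\le 1$, the integrand is dominated by $2c\,|\omega|^{p+k}|F(\omega)|$, which is $\omega$-integrable exactly when $p+k-m<-1$, i.e.\ when $k<m-p-1$ (there is no trouble near $\omega=0$, where $p\ge 0$; and for $\theta=o$ we have $p>0$ and $s^{p}-(-s)^{p}$ vanishes at the origin). Because this bound does not involve $x$, every interchange of a $\rho$-limit with the integral below is justified by dominated convergence and automatically produces convergence uniform in $x$.

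With this in hand I would argue as follows. Define $f^{\theta}_{p}(x):=\mathcal{F}^{-1}\bigl[\,c\,(s^{p}\pm(-s)^{p})F(s)\,\bigr](x)$; the integral converges absolutely, and it may be differentiated under the integral sign $j$ times whenever $j<m-p-1$, so $f^{\theta}_{p}\in C^{k}(\RR)$ for all $k<m-p-1$. For (i) and (ii) I would subtract the two integral representations,
\begin{equation*}
\frac{\partial^{k}}{\partial x^{k}}\bigl[\phi^{\theta}_{p}(x,\rho)-f^{\theta}_{p}(x)\bigr]=\frac{c}{2\pi}\int_{-\infty}^{\infty}(i\omega)^{k}\bigl(s^{p}\pm(-s)^{p}\bigr)\bigl(e^{-\omega^{2}/(2\rho^{2})}-1\bigr)F(\omega)\,e^{i\omega x}\,d\omega ,
\end{equation*}
bound its modulus by $\frac{c}{2\pi}\int|\omega|^{p+k}\bigl|e^{-\omega^{2}/(2\rho^{2})}-1\bigr|\,|F(\omega)|\,d\omega$ (independent of $x$), and note that $\bigl|e^{-\omega^{2}/(2\rho^{2})}-1\bigr|\le 1$ tends to $0$ pointwise, so dominated convergence drives the whole thing to $0$ as $\rho\to\infty$; the hypothesis $xf\in L_{1}(\RR)$ is used in (ii) and (iii) exactly as in Theorem 5.2(ii), to make the Fourier-inversion step identifying the limit with $\frac{d^{k}}{dx^{k}}f^{\theta}_{p}$ (and, in (iii), the direct transform $\mathcal{F}(f^{\theta}_{p})=c(s^{p}\pm(-s)^{p})F(s)$) fully rigorous. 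Parts (iv)--(v) are the identical estimate with the factor $e^{-\omega^{2}/(2\rho^{2})}-1$ replaced by $e^{-\omega^{2}/(2\rho^{2})}$, which as $\rho\to 0$ stays bounded by $1$ and tends to $0$ pointwise off the origin, forcing $\frac{\partial^{k}}{\partial x^{k}}\phi^{\theta}_{p}(x,\rho)\to 0$ uniformly. For (vi) at a point $\rho_{0}>0$ I would subtract the representations at $\rho$ and at $\rho_{0}$ and use $\bigl|e^{-\omega^{2}/(2\rho^{2})}-e^{-\omega^{2}/(2\rho_{0}^{2})}\bigr|\le 2$ together with the pointwise continuity of $\rho\mapsto e^{-\omega^{2}/(2\rho^{2})}$; the case $\rho_{0}=0$ is precisely (iv)--(v), on observing that $\phi^{\theta}_{p}(x,0)=f*g^{\theta}_{p}(x,0)=0$ since $g^{\theta}_{p}(x,0)=0^{\,p+1}\Lambda^{\theta}_{p}(0)=0$ for $p>-1$. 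Finally (iii) is read off: the inverse-transform direction holds by the definition of $f^{\theta}_{p}$ together with (i), and the direct direction follows from Theorem 5.2(ii) once $m>p+1$ and $xf\in L_{1}$ supply the integrability and smoothness of $c(s^{p}\pm(-s)^{p})F(s)$.

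The hard part is not any single estimate but the bookkeeping linking the differentiation order $k$ to the decay exponent $m$ of $F$: at every step one must verify that the differentiated Fourier integrand is majorized by an $\omega$-integrable function that is independent of $x$, and the condition for that --- $p+k-m<-1$ --- is exactly the hypothesis $k<m-p-1$, with no slack. The secondary delicate point is the legitimacy of the Fourier-inversion manipulations for the limit function $f^{\theta}_{p}$, which being a $p$th fractional derivative of $f$ need not lie in $L_{1}(\RR)$; this is precisely where the hypothesis $xf\in L_{1}(\RR)$ enters (exactly as in the proof of Theorem 5.2(ii)), and one should double-check that the $\omega\to 0$ behaviour of the factor $|\omega|^{p}$ causes no difficulty there, which it does not because $p\ge 0$ (and $p>0$ in the odd case).
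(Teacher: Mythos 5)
Your proof follows essentially the same route as the paper: both work entirely on the Fourier side via the representation of Lemma 5.3, define $f^{\theta}_{p}$ as the inverse transform of $c\left(s^{p}\pm(-s)^{p}\right)F(s)$, and control the difference of the integrands uniformly in $x$ using the decay bound of Proposition 5.1 under the condition $k<m-p-1$. The only difference is cosmetic: you invoke dominated convergence where the paper carries out the same estimate by hand, splitting the integral at $|s|=T$ and bounding the tails and the middle part separately (your treatment of the $\rho\to 0$ case is in fact slightly cleaner, since it does not require the exponential factor to be small at $\omega=0$).
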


\begin{proof} From Lemma 5.3 it follows that for $s = i\omega$ %typo omega vs w -LukeO
\begin{equation}
\phi^{e}_{p} (x, \rho ) = \frac{c}{2\pi i}
\int^{i\infty}_{-i\infty}
F (s)\left( s^{p} + ( -s)^{p} \right)e^{s^{2} /(2\rho^{2} ) }
e^{sx} ds .
\end{equation}

Since $m > p + 1$, inequality (61) implies absolute convergence of the integral 
\begin{equation}
f^{ e}_{p} (x) := \frac{c}{2\pi i } \int^{i\infty}_{-i\infty  }
F (s)\left(s^{p} + (-s)^{p}\right)
e^{sx} ds
\end{equation}
for all $x \in  (-\infty , \infty )$. Let us prove that uniformly on $(-\infty , \infty )$, 
\begin{equation}
f ^{e}_{p} (x) = \lim_{\rho \rightarrow \infty} \phi^{e}_{p} (x, \rho ).
\end{equation}
Let  $\epsilon> 0$. Since $m > p + 1$, inequality (61) implies that 
$\exists T > 0$ such that 
 \begin{equation}  
\left| \frac{c}{2\pi i}
\left( \int^{i \infty}_{iT } + \int^{iT}_{i\infty} \right)
F(s)\left(s^{p}+ (-s)^{p} \right) e^{sx} ds
\right| 
< \frac{\epsilon}{2}.
\end{equation}
Let us define $\mu$ by 
\begin{equation}
\mu= sup \left\{ \left|   \frac{c}{2 \pi i} F(s) e^{sx} \left(s^{p}+ (-s)^{p} \right)\right|  : s \in [-iT,T] \right\}
\end{equation}
Then, since $s = i\omega$ there exists $\rho_{0}$ such that (for $\epsilon$ small enough) 
\begin{equation}
\forall\; \rho  \geq \rho_{0} \hspace{1em} \left| 1 - e^{s^{2}/(2 \rho^{2})}\right| < \frac{\epsilon \pi}{2 \mu T c} < 1.
\end{equation}
Formulas (67), (68), (70)-(72) imply that for any $x$ and for any $\rho  \geq \rho_{0}$
\begin{align*}
&|f^{e}_{p}(x) - \phi^{e}_{p}(x,\rho)| \leq 
\left| 
\frac{c}{2\pi i } \int^{i\infty}_{-i\infty  }
F (s)\left(s^{p} + (-s)^{p}\right)
e^{sx} 
\left(1 - e^{s^{2}/(2\rho^{2})}  \right)
ds
 \right|
\\
&\leq 
\left| \frac{c}{2\pi i}
\left( \int^{i \infty}_{iT } + \int^{iT}_{i\infty} \right)
F(s) e^{sx} \left(s^{p} + (-s)^{p} \right)ds \right| 
+ \left| 
\int_{-iT}^{iT}\frac{\epsilon \pi}{2\mu T c} \mu ds
\right| 
\\
&< \frac{\epsilon}{2} + \frac{\epsilon}{2} = \epsilon,
\end{align*}
which proves (69). Therefore, $f ^{e}_{p} (x)$ is continuous. The fact that $f ^{e}_{p} (x)$ 
is $k$ times continuously differentiable follows from (61), the fact that $m > 
p + 1 + k$, and from the theorem on differentiability of improper integrals by 
a parameter (see \cite{k15y}). This proves (i) for $\theta = e$. The proof for (ii) is entirely 
similar to the proof of (i) except that we need to multiply the functions under 
the integral by $sk$ , and we still use (61) to justify absolute convergence.

Let us prove (iii). Since $xf (x)$ is in $L_{1} (\RR$), $F (s)$ is continuously differentiable 
in $s$. Therefore, $F (s)\left( s^{p} + (-s)^{p}  \right)$
is also continuously differentiable in $s$. 
Absolute integrability of $F (s)\left( s^{p} + (-s)^{p}  \right)$ again follows from (61). From (68) 
it follows that 
$f ^{e}_{p} (x) = 
\mathcal{F}^{-1}\left(cF (s)\left( s^{p} + (-s)^{p}  \right)\right).$ Therefore, from the theorem 
on the inverse Fourier transform, it follows that 
$\mathcal{F}\left(f ^{e}_{p} (x)\right) = cF (s) \left( s^{p}+ (-s)^{p}\right)$. This proves (iii) for $\theta = e$. The proof for $\theta = o$ is entirely similar. 

\noindent(iv) Let  > 0. From (67) and the fact that $e^{s^{2}/(2\rho^{ 2} )} < 1$, it follows that 
$\exists T > 0$ such that $\forall\;\rho  > 0$, we have 
 \begin{equation}
 \left| \frac{c}{2\pi i}
\left( \int^{i \infty}_{iT } + \int^{-iT}_{-i\infty} \right)
F(s) e^{s^{2}/(2\rho^{2})}e^{sx} \left(s^{p} + (-s)^{p} \right)ds \right|  < \frac{\epsilon}{2}.
\end{equation}

Let $\mu$ be defined as in (71). Then, 
$\exists \rho_{0} > 0$ such that for any $\rho  < \rho_{0}$, 
\begin{equation} 
\left| e^{s^{2}/(2\rho^{ 2} )} \right|
<  \frac{\epsilon}{4 \mu T}.
\end{equation}
Now (67), (73) and (74) imply that for $\rho  < \rho_{0}$: 
\begin{align*}
&\left|\phi^{e}_{p} (x, \rho ) \right| \leq  
 \left| \frac{c}{2\pi i}
\left( \int^{i \infty}_{iT } + \int^{iT}_{i\infty} \right)
K(s)ds \right| 
+ \left|  \frac{c}{2\pi i} \int_{-iT}^{iT}K(s) ds \right| <
\\
&\frac{\epsilon}{2} +  \frac{\epsilon}{4 \mu T} \left|  \frac{c}{2\pi i} \int_{-iT}^{iT}  F(s)  \left(s^{p} + (-s)^{p} \right)e^{sx}ds \right|
\leq
\\
&\frac{\epsilon}{2} +  \frac{\epsilon}{4 \mu T} 2 \mu T = \epsilon,
\end{align*}
where $K(s) = F(s) e^{s^{2}/(2\rho^{2})}e^{sx} \left(s^{p} + (-s)^{p} \right)$. 
This proves (i) for $\theta = e$. If $\theta = o$ the proof is similar. The proof of (v) is entirely similar. The proof of (vi) is a simplified version of (ii), and is therefore omitted.
\end{proof}
 
The following theorem is important in showing that functions $\phi^{\theta}_{p} (x, \rho )$ 
are essentially localized in a domain with bounded $x$ and $\rho$.

\begin{theorem} If a function $f (x)$ satisfies the conditions of Proposition 5.1 
with $m > p + 1$, then for any $\epsilon  > 0$ there exist $T > 0$ and $Q > 0$ such that 
if 
$|x| > T$ or if $\rho  < Q$, then 
\begin{align}
\left|\phi^{\theta}_{p} (x, \rho )  \right| <  \epsilon,\\ \notag 
\frac{d^{k} \phi^{\theta}_{p} (x, \rho )}{ dx^{k}} < \epsilon . 
\end{align}
\end{theorem}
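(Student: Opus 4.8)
The plan is to reduce both alternatives in the statement to a single structural fact: for each fixed $\theta\in\{e,o\}$, the family of functions $x\mapsto\phi^{\theta}_{p}(x,\rho)$, as $\rho$ runs over the two-point compactification $[0,\infty]$ of the half-line with the conventions $\phi^{\theta}_{p}(\cdot,0):=0$ and $\phi^{\theta}_{p}(\cdot,\infty):=f^{\theta}_{p}$ supplied by Theorem 5.4, is a compact subset of the Banach space $C_{0}(\RR)$ of continuous functions vanishing at infinity (sup norm); the same holds with an extra factor $s^{k}$ inserted, for each integer $k$ with $k<m-p-1$, yielding the $k$-th $x$-derivative. Granting this, the theorem is immediate. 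The alternative $\rho<Q$ is precisely Theorem 5.4(iv),(v): uniform convergence of $\phi^{\theta}_{p}(\cdot,\rho)$ and of its $x$-derivatives to $0$ as $\rho\to0$ gives, upon taking the minimum over the finitely many orders involved, a single $Q$ with $\sup_{x}|\phi^{\theta}_{p}(x,\rho)|<\epsilon$ and $\sup_{x}|\tfrac{d^{k}}{dx^{k}}\phi^{\theta}_{p}(x,\rho)|<\epsilon$ for $\rho<Q$. For the alternative $|x|>T$ one invokes the standard fact that a compact set $K\subset C_{0}(\RR)$ decays uniformly: cover $K$ by finitely many $\tfrac{\epsilon}{2}$-balls about $h_{1},\dots,h_{N}$ and choose $T_{i}$ with $|h_{i}(x)|<\tfrac{\epsilon}{2}$ for $|x|>T_{i}$; then $T=\max_{i}T_{i}$ forces $|h(x)|<\epsilon$ for every $h\in K$ and every $|x|>T$. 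Applying this to the finitely many compact families above (one per relevant $k$, including $k=0$) and taking the largest resulting $T$ completes the argument.

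So the real content is the compactness assertion, and the Fourier description of Lemma 5.3 is the right tool. Writing $s=i\omega$ and $\phi^{e}_{p}(\cdot,\rho)=\mathcal{F}^{-1}(G_{\rho})$ with
\[
G_{\rho}(s)=c\,F(s)\bigl(s^{p}+(-s)^{p}\bigr)e^{s^{2}/(2\rho^{2})}
\]
(by the scaling computation behind Lemma 5.3 the normalising constant may be taken independent of $\rho$; replace $s^{p}+(-s)^{p}$ by $s^{p}-(-s)^{p}$ when $\theta=o$), Proposition 5.1 together with the hypothesis $m>p+1$ shows that $\Psi(s):=\bigl|c\,F(s)(s^{p}+(-s)^{p})\bigr|$ lies in $L_{1}(\RR)$ --- near $\omega=0$ because $F$ is continuous and $p\ge0$, near $\omega=\infty$ because $\Psi(s)\lesssim|\omega|^{p-m}$ with $m-p>1$ --- and $|G_{\rho}(s)|\le\Psi(s)$ uniformly in $\rho$. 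For each $\omega$ the factor $e^{-\omega^{2}/(2\rho^{2})}$ depends continuously on $\rho\in[0,\infty]$, with value $0$ at $\rho=0$ and $1$ at $\rho=\infty$, so dominated convergence makes $\rho\mapsto G_{\rho}$ a continuous map of the compact space $[0,\infty]$ into $L_{1}(\RR)$, with endpoint values $0$ and $c\,F(s)(s^{p}+(-s)^{p})$ (the latter being the transform of $f^{e}_{p}$, cf. formula (68)). Since $\mathcal{F}^{-1}$ is a bounded linear map of $L_{1}(\RR)$ into $C_{0}(\RR)$ (Riemann--Lebesgue, with $\|\mathcal{F}^{-1}g\|_{\infty}\le\tfrac{1}{2\pi}\|g\|_{L_{1}}$), the image $\{\phi^{e}_{p}(\cdot,\rho):\rho\in[0,\infty]\}$ is the continuous image of a compact set, hence compact in $C_{0}(\RR)$, and it contains the endpoint functions $0$ and $f^{e}_{p}$, consistently with Theorem 5.4. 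For the derivatives one repeats this with $s^{k}G_{\rho}$ in place of $G_{\rho}$; the dominating function becomes $|s|^{k}\Psi(s)$, still integrable precisely because $k<m-p-1$ (so $m-k-p>1$), and the same reasoning gives compactness of $\{\tfrac{d^{k}}{dx^{k}}\phi^{\theta}_{p}(\cdot,\rho):\rho\in[0,\infty]\}$. The case $\theta=o$ is identical.

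I expect the only steps requiring genuine care to be (a) the $L_{1}$-continuity of $\rho\mapsto G_{\rho}$ (and $\rho\mapsto s^{k}G_{\rho}$) up to the endpoint $\rho=\infty$ --- a routine dominated-convergence estimate, but this is where the full force of $m>p+1$ is spent --- and (b) the clean statement and use of ``compactness in $C_{0}(\RR)$ implies uniform decay''. Everything else --- the passage from $L_{1}$-control of $G_{\rho}$ to sup-norm control of $\phi^{\theta}_{p}$, the bookkeeping over the finitely many derivative orders $k$ and over $\theta$, and the matching of endpoint values with the functions furnished by Theorem 5.4 --- is mechanical.
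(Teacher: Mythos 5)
Your proof is correct, and it reaches the conclusion by a genuinely different packaging of the underlying compactness idea. The paper splits the parameter range into three pieces: $\rho<Q$ (handled by Theorem 5.4(iv),(v)), $\rho>R$ (handled by the uniform convergence $\phi^{\theta}_{p}(\cdot,\rho)\to f^{\theta}_{p}$ of Theorem 5.4(i) together with the decay of the limit function $f^{\theta}_{p}$ at infinity), and the compact middle interval $[Q,R]$, which it covers by finitely many neighborhoods $O_{\rho_i}$ using the $\rho$-continuity of Theorem 5.4(vi) and the pointwise-in-$\rho$ decay $\lim_{|x|\to\infty}\phi^{\theta}_{p}(x,\rho)=0$; the final $T$ is the maximum over the finite cover. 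You instead compactify the parameter space to $[0,\infty]$, prove $L_{1}$-continuity of the Fourier-side family $G_{\rho}$ by dominated convergence (with dominating function $|s|^{k}\Psi(s)$, integrable exactly when $k<m-p-1$), push forward through the bounded map $\mathcal{F}^{-1}:L_{1}\to C_{0}$, and invoke the standard fact that a compact subset of $C_{0}(\RR)$ decays uniformly. The two arguments spend the hypothesis $m>p+1$ in the same place and both rest on a finite-subcover step, but yours runs entirely in the frequency domain and is arguably tighter on one point: the paper's assertion that $f^{(k)}*\rho^{p+1}\Lambda^{\theta}_{p}\in L_{1}(\RR)$ ``implies'' $\lim_{|x|\to\infty}\frac{d^{k}\phi^{\theta}_{p}}{dx^{k}}=0$ is not valid for arbitrary $L_{1}$ functions and needs the extra regularity that your Riemann--Lebesgue route supplies for free. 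Your version also yields the slightly stronger conclusion that the bound for $|x|>T$ holds uniformly over all $\rho$, including $\rho=\infty$. The one claim you should make explicit rather than assert in passing is that the constant $c(\rho)$ of Lemma 5.3 is in fact independent of $\rho$; this does follow from the $\rho^{p+1}$ normalization of $g^{\theta}_{p}$ and a one-line change of variables, and it is needed for the single dominating function $\Psi$ to work.
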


\begin{proof}
Since $m > p + 1 > 0$ the function $f^{ (k)} (x)$ is in $L_{1} (\RR)$ for $0 \leq k < m - p - 2$. As a result of the properties of convolutions 
$f^{ (k)} (x) * \rho ^{p+1} \Lambda^{ \theta}_{p} (x)$ is in $L_{1} (\RR)$ for these values of $k$. This implies that for every $\rho$
\begin{equation}
 \lim_{|x|\rightarrow \infty  }
\frac{d^{k} \phi^{\theta}_{p} (x, \rho ) }
{dx^{k}} = 0 ,\hspace{2em} \text{ for } 0 \leq k < m - p - 2 .
\end{equation}

Let  $\epsilon> 0$.  From (75) and (76) and Theorem 5.4 (i) it follows that there exist 
positive numbers $R$ and $Q$ such that 
 \begin{equation}
\left|
f^{\theta}_{p} (x) - \phi^{\theta}_{p} (x, \rho )  
\right| 
< \frac{\epsilon}{ 2} , \hspace{1em} \forall \; \rho  > R, \;\;\forall\;x , 
\end{equation}
and 
\begin{equation} 
\left| \phi^{\theta}_{p} (x, \rho  )  \right|
< \epsilon, 
\hspace{1em} \forall \; \rho' <Q, \;\;\forall\;x .
\end{equation}

Since $f^{\theta}_{p} (x)$ is in $L_{1} (\RR)$, $\lim_{|x|\rightarrow \infty} f^{\theta}_{p} (x) = 0$. This implies that  $\exists T_{1}$ such that 
\begin{equation} 
\left| f^{\theta}_{p} (x) \right| 
<  \frac{\epsilon}{2} ,\hspace{1em} \forall\; |x| > T_{1} . 
\end{equation}
Inequalities (77) and (79) imply that 
\begin{equation} 
\left| \phi^{\theta}_{p} (x, \rho ) \right| 
< , \epsilon \hspace{1em} \forall \; \rho  > R, \;\; \forall \; |x| > T_{1} .
\end{equation}
Using (80) and Theorem 5.4 (vi) we can show that for every $\rho  \in  [Q, R]$ there 
exists an open neighborhood $O_{y}$ such that for some positive number $T (\rho )$ 
 \begin{equation}
\left|\phi^{\theta}_{p} (x, \rho ) \right| 
< \epsilon ,  \hspace{1em} \forall \; \rho'  \in O_{y}, \;\text{and}\; |x| > T(\rho) .
\end{equation}
Since $[Q, R]$ is compact, there exists a finite number of open sets $O_{\rho_{1} }, . . . , O_{\rho _{n} }$, 
covering $[Q, R]$. Let $T = \max( T_{1} , T (\rho_{ i }) , i = 1, . . . , n)$, then (78), (80) and 
(81) imply that 
 \begin{equation*}
\phi^{\theta}_{p} (x, \rho )  
< \epsilon \hspace{1em} \text{if either } 
|x| > T \text{ or } \rho  < Q .
\end{equation*}
 
The second condition in (75) is proved similarly. 
\end{proof}

\noindent \textbf{Remark}: The results of this section can be generalized to functions $f (x)$ 
such that they and their derivatives have a finite number of jump discontinuities. 
\begin{theorem}
Let $f (x)$ be as in Proposition 5.1 with $m > p + 1$. Then if 
$\Delta > 0$ is such that $p - \Delta > 0$, $p + \Delta + 1 < m$, then all convergence results 
in Theorem 5.4 are uniform with respect to $q \in  [ p - \Delta, p + \Delta ]$. In other 
words, $\forall\; \epsilon > 0$, $\exists \rho_{0}$ such that if $\rho  > \rho_{0}$ , then 
$\left|
\phi^{\theta}_{q} (x, \rho ) - f^{\theta}_{q} (x) 
\right| < \epsilon$, $\forall\;x$ and 
$\forall\;q \in  [ p - \Delta, p + \Delta ]$ (and similarly for convergences $\phi^{\theta}_{q} (x, \rho ) %%where's the end of the parenthetical remark?-LukeO
\rightarrow \phi^{\theta}_{q} (x, \rho_{1} )$
as $\rho  \rightarrow \rho_{1}$ and $\phi^{\theta}_{q} (x, \rho ) \rightarrow 0$ uniformly as 
$\rho \rightarrow 0$ for $q \in  [ p - \Delta, p + \Delta ]$. 
Furthermore, for any  $\epsilon > 0$, 
$\exists T > 0$ and $Q > O$ such that if $|x| > T$ or if  $\rho  < Q$, then 
$\forall\; q \in  [p - \Delta, p + \Delta]$ where $p - \Delta > 0, p + \Delta + 1 < m$ we have 
\begin{equation}
\left|\phi_{q} (x, \rho )\right| <  \epsilon 
 \end{equation}
\begin{equation} 
\left|
\frac{d^{k} \phi^{\theta}_{q} (x, \rho )}{dx^{k}}  
\right| 
< \epsilon \hspace{1em} \text{for } 0 < k < m - p - 2 .
\end{equation}
\end{theorem}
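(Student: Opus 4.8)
The plan is to upgrade the pointwise-in-$q$ estimates of Theorems 5.4 and 5.5 to estimates that are uniform over the compact parameter interval $[p-\Delta,p+\Delta]$. The key observation that makes this possible is that the only place $q$ enters the Fourier-side representations
\[
\phi^{\theta}_{q}(x,\rho) = \frac{c}{2\pi i}\int^{i\infty}_{-i\infty} F(s)\bigl(s^{q}\pm(-s)^{q}\bigr)e^{s^{2}/(2\rho^{2})}e^{sx}\,ds
\]
is through the factor $s^{q}\pm(-s)^{q}$, and on the imaginary axis $s=i\omega$ this has modulus at most $2|\omega|^{q}\le 2(|\omega|^{p-\Delta}+|\omega|^{p+\Delta})$, a bound independent of $q\in[p-\Delta,p+\Delta]$. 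So first I would record this elementary two-sided envelope: for all $q$ in the interval, $|s^{q}\pm(-s)^{q}|\le 2\max(|\omega|^{p-\Delta},|\omega|^{p+\Delta})\le 2(|\omega|^{p-\Delta}+|\omega|^{p+\Delta})$, and combine it with Proposition 5.1 (which gives $|F(s)|\le c/|s|^{m}$ with $m>p+\Delta+1$) to obtain a single integrable majorant $G(\omega) = c'\,(|\omega|^{p-\Delta}+|\omega|^{p+\Delta})/(1+|\omega|^{m})$ dominating $|F(s)(s^{q}\pm(-s)^{q})|$ for every $q$ in the interval simultaneously. The hypotheses $p-\Delta>0$ and $p+\Delta+1<m$ are exactly what is needed for $G\in L_{1}(\RR)$.

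Next I would rerun the proof of Theorem 5.4(i),(iv),(vi) verbatim but with this $q$-independent majorant in place of the $q$-specific bound. Concretely: the tail cutoff $T$ in the analogue of (70) can be chosen from $G$ alone, hence works for all $q$; the constant $\mu$ in (71) is replaced by the supremum of $|\,c\,G(\omega)\,e^{i\omega x}\,|$-type quantities, again finite and $q$-free after noting $|e^{i\omega x}|=1$; and the choice of $\rho_{0}$ in (72), which depends only on $\mu$, $T$, $c$ and $\epsilon$, is therefore also uniform in $q$. The same substitution handles the $\rho\to 0$ convergence (iv) and the continuity-in-$\rho$ statement (vi). For the derivative versions one multiplies the integrand by $s^{k}$; since $0<k<m-p-2 \le m-(p+\Delta)-1$ the majorant $|\omega|^{k}G(\omega)$ is still integrable uniformly in $q$, so (ii) and (v) go through identically. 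This establishes the first block of uniform convergences claimed, namely $\phi^{\theta}_{q}(x,\rho)\to f^{\theta}_{q}(x)$, $\phi^{\theta}_{q}(x,\rho)\to\phi^{\theta}_{q}(x,\rho_{1})$, and $\phi^{\theta}_{q}(x,\rho)\to 0$ as $\rho\to 0$, all uniformly in $(x,q)$.

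For the localization statement (82)–(83) I would re-examine the proof of Theorem 5.6 and check that each of its ingredients is available uniformly in $q$. The decay $\lim_{|x|\to\infty}\frac{d^{k}}{dx^{k}}\phi^{\theta}_{q}(x,\rho)=0$ for fixed $\rho$ follows as before from $f^{(k)}*\rho^{q+1}\Lambda^{\theta}_{q}\in L_{1}(\RR)$; the large-$\rho$ part uses the uniform convergence just proved together with the bound $|f^{\theta}_{q}(x)|=|\mathcal{F}^{-1}(cF(s)(s^{q}\pm(-s)^{q}))|$, which via the same majorant $G$ and Riemann–Lebesgue tends to $0$ as $|x|\to\infty$ \emph{uniformly in} $q$, giving a single $T_{1}$; the small-$\rho$ part is the uniform version of (iv). The one genuinely delicate point is the compactness argument on $\rho\in[Q,R]$ in the proof of Theorem 5.6, which must now be run in the product $\rho\in[Q,R]$, $q\in[p-\Delta,p+\Delta]$: I would instead invoke the uniform-in-$q$ continuity (vi) to get, for each $(\rho,q)$, a neighborhood in $(\rho,q)$-space on which $|\phi^{\theta}_{q'}(x,\rho')|<\epsilon$ for $|x|>T(\rho,q)$, and then use compactness of the product rectangle $[Q,R]\times[p-\Delta,p+\Delta]$ to extract a finite subcover and set $T=\max$ of finitely many $T(\rho_{i},q_{i})$ together with $T_{1}$. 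I expect this compactness-in-two-variables bookkeeping to be the main obstacle — not because it is deep, but because one must be careful that the neighborhoods supplied by (vi) really are open in the $q$ variable as well; that in turn is guaranteed precisely by the $q$-uniformity established in the second step, so the argument closes.
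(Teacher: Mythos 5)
Your proposal is correct and follows exactly the route the paper intends: the paper's own proof consists of the single sentence that the argument is ``entirely similar to the proof of Theorems 5.4 and 5.5, except that all inequalities can be satisfied uniformly on $[p-\Delta, p+\Delta]$,'' and your $q$-independent integrable majorant $c'(|\omega|^{p-\Delta}+|\omega|^{p+\Delta})/(1+|\omega|^{m})$ together with the product-compactness bookkeeping is precisely the content needed to justify that sentence.
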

\begin{proof}
The proof is entirely similar to the proof of Theorems 5.4 and 5.5, except that all inequalities can be satisfied uniformly on $[p - \Delta, p + \Delta]$. 
\end{proof}

\begin{theorem}
Let $f (x)$ be as in Proposition 5.1 with $m > p + 1 + 2l$ with $l > 0$. Then there is a continuous function $\Psi
^{\theta}_{p} (x, \sigma)$, $\theta = e, o$, defined for all positive $\sigma$ such that: 
\begin{itemize}
\item[(i)] $\Psi
^{\theta}_{p} (x, \sigma) = \phi^{\theta}_{p} (x, \rho )$ for $\rho  > 0, \rho  = \frac{1}{\sigma} $, and $\Psi
^{\theta}_{p} (x, 0) = f^{\theta}_{p} (x)$.
 
\item[(ii)] Function $\Psi
^{\theta}_{p} (x, \sigma)$ has continuous mixed derivatives 
$\frac{ \partial^{k+n}}{\partial x^{k} \partial  \sigma^{n}} \Psi
^{\theta}_{p} (x, \sigma)$
for any $k$ and $n$ such that $2n + k \leq 2l$.
 
\item[(iii)] $\Psi
^{e}_{p} (x, \sigma)$ is even, and$\Psi
^{o}_{p} (x, \sigma)$  is odd. 

\item[(iv)] $\Psi
^{\theta}_{p} (x, \sigma) \sim F (s) \left( s^{p}+ (-s)^{p} \right) e^{s^{2}\sigma^{2}/2} \hspace{1em}$ if  $\theta = e$, and 

$\Psi
^{\theta}_{p} (x, \sigma) \sim F (s) \left( s^{p}- (-s)^{p} \right) e^{s^{2}\sigma^{2}/2} \hspace{1em}$ if  $\theta = e$,if $\theta = o$

\item[(v)] $\sigma \frac{\partial^{2} \Psi
^{\theta}_{p} (x, \sigma)}
{\partial  x^{2}}
 = 
\frac{\partial\Psi
^{\theta}_{p} (x, \sigma)}{\partial \sigma}$ for every $x$ and $\sigma$. 
\end{itemize}
\end{theorem}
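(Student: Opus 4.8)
The plan is to define $\Psi^{\theta}_{p}(x,\sigma)$ directly by the change of variable $\sigma = 1/\rho$ for $\sigma > 0$, setting $\Psi^{\theta}_{p}(x,\sigma) := \phi^{\theta}_{p}(x,1/\sigma)$, and then extend it to $\sigma = 0$ by declaring $\Psi^{\theta}_{p}(x,0) := f^{\theta}_{p}(x)$, where $f^{\theta}_{p}$ is the limit function whose existence was established in Theorem 5.4. Part (i) is then true by fiat; the substance is showing the extension is continuous and has the claimed regularity. First I would observe that Theorem 5.4(i) and (vi), together with Theorem 5.6, give uniform convergence $\phi^{\theta}_{p}(x,\rho) \to f^{\theta}_{p}(x)$ as $\rho \to \infty$, i.e. as $\sigma \to 0^{+}$, uniformly in $x$; this yields joint continuity of $\Psi^{\theta}_{p}$ at $\sigma = 0$. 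For $\sigma > 0$, joint continuity and smoothness follow from Lemma 5.3(a) after the change of variables (which is a diffeomorphism of $(0,\infty)$ onto itself), so the only genuinely new point is controlling the mixed derivatives up to the boundary.

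For part (ii) and (iv) I would work on the Fourier side, where everything is transparent. By Lemma 5.3(b) we have $\phi^{e}_{p}(x,\rho) \sim c(\rho)(s^{p} + (-s)^{p}) e^{s^{2}/(2\rho^{2})} F(s)$, so substituting $\rho = 1/\sigma$ gives $\Psi^{e}_{p}(x,\sigma) \sim c(1/\sigma)(s^{p} + (-s)^{p}) e^{s^{2}\sigma^{2}/2} F(s)$, with the analogous formula for $\theta = o$; this is (iv), up to absorbing the harmless constant $c(1/\sigma)$ into the equivalence (recall the running convention that kernels are only determined up to a constant factor). The representation
\begin{equation}
\Psi^{e}_{p}(x,\sigma) = \frac{c}{2\pi i} \int_{-i\infty}^{i\infty} F(s)\bigl(s^{p} + (-s)^{p}\bigr) e^{s^{2}\sigma^{2}/2} e^{sx}\, ds \notag
\end{equation}
then lets me differentiate under the integral sign: $\partial_{x}$ brings down a factor $s$, and $\partial_{\sigma}$ brings down $s^{2}\sigma$, so a mixed derivative $\partial_{x}^{k}\partial_{\sigma}^{n}$ produces an integrand bounded (via Proposition 5.1's estimate $|F(s)| \le c/|s|^{m}$) by a constant times $|s|^{p+k+2n-m}$ times a polynomial in $\sigma$ — actually one must be slightly careful since each $\sigma$-derivative can also hit the exponential repeatedly, but since $s = i\omega$ is purely imaginary the factor $e^{s^{2}\sigma^{2}/2} = e^{-\omega^{2}\sigma^{2}/2}$ has modulus at most $1$ for all $\sigma \ge 0$, and its $\sigma$-derivatives are bounded on compact $\sigma$-intervals by polynomials in $\omega$ and $\sigma$. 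The worst power of $|\omega|$ arising from $2n + k$ total derivatives is $|\omega|^{2n+k}$ (from hitting the exponential) or $|\omega|^{p}$ (from the prefactor), giving an integrand dominated by $|\omega|^{p+2n+k}/|\omega|^{m}$ near infinity; the condition $m > p + 1 + 2l$ with $2n + k \le 2l$ guarantees this decays faster than $|\omega|^{-1}$, so the differentiated integral converges absolutely and uniformly in $(x,\sigma)$ on compact sets, which justifies differentiation under the integral and gives continuity of the mixed derivative up to $\sigma = 0$.

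Parts (iii) and (v) are then quick. For (iii), evenness/oddness in $x$ is inherited from the kernels $\Lambda^{\theta}_{p}$ (even for $\theta = e$, odd for $\theta = o$) via the convolution, and is also visible from the Fourier formula since $(s^{p} + (-s)^{p})$ is even in $s$ while $(s^{p} - (-s)^{p})$ is odd. For (v), I would verify the PDE $\sigma\, \partial_{x}^{2}\Psi = \partial_{\sigma}\Psi$ on the Fourier side: $\partial_{x}^{2}$ corresponds to multiplication by $s^{2}$, and $\partial_{\sigma}$ of $e^{s^{2}\sigma^{2}/2}$ is $s^{2}\sigma\, e^{s^{2}\sigma^{2}/2}$, so the two sides agree; then invert the Fourier transform (the hypotheses on $m$ and $l > 0$ guarantee enough decay to apply the inversion theorem, as in Theorem 5.4(iii)). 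Alternatively, (v) follows from the known identity (53), $\phi_{xx} = -\phi_{\rho}\rho^{3}$, by the chain rule $\partial_{\sigma} = -\rho^{2}\partial_{\rho}$ under $\rho = 1/\sigma$, which turns $\phi_{xx} = -\rho^{3}\phi_{\rho}$ into $\partial_{\sigma}\Psi = \rho\, \partial_{x}^{2}\Psi = \sigma^{-1}\partial_{x}^{2}\Psi$ — wait, this gives $\sigma\, \partial_{\sigma}\Psi = \partial_{x}^{2}\Psi$, matching (v) after one checks the bookkeeping; I would present the Fourier-side derivation as the clean argument and mention (53) as a sanity check. The main obstacle is the uniform-in-$\sigma$-up-to-$0$ control of the mixed derivatives in part (ii): one must track exactly how many powers of $|\omega|$ each of the $2n + k$ derivatives can contribute (at most two per $\sigma$-derivative, one per $x$-derivative) and confirm the bookkeeping closes under $m > p + 1 + 2l$; everything else is routine given Theorems 5.4 and 5.6 and Lemma 5.3.
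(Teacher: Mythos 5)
Your proposal follows essentially the same route as the paper: both define $\Psi^{\theta}_{p}$ through the Fourier integral representation $\frac{c}{2\pi i}\int_{-i\infty}^{i\infty}F(s)\left(s^{p}\pm(-s)^{p}\right)e^{s^{2}\sigma^{2}/2}e^{sx}\,ds$, obtain (ii) by differentiating under the integral sign with exactly your power count (each $\sigma$-derivative costing at most $|s|^{2}$, each $x$-derivative $|s|$, closed by $m>p+1+2l\geq p+1+2n+k$ and the bound $|F(s)|\leq c/|s|^{m}$), and read off (i), (iii), (iv), (v) directly from that formula together with Theorem 5.4. Your extra care about the $\sigma$-derivatives hitting the exponential, and the sanity check of (v) against identity (53), are welcome refinements but not a different argument.
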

\begin{proof} Let $\theta = e$. Then we define $\Psi
^{\theta}_{p} (x, \sigma)$ by 
\begin{equation}
\Psi
^{\theta}_{p} (x, \sigma) = \frac{c}{2\pi i}
\int^{i \infty }_{-i\infty} F (s)\left( s^{p}+ (-s)^{p} \right) e^{s^{2} \sigma^{2} /2} e^{sx} ds
,\end{equation}
where $F (s) = \mathcal{F} (f (x))$, $s = i\omega$.
As in the proof of Theorem 5.4, one can show that $F (s)\left( s^{p}+ (-s)^{p} \right) e^{s^{2} \sigma^{2} /2}$ is indeed the Fourier transform of $\Psi
^{\theta}_{p} (x, \sigma)$ 
for $\theta = e$. This proves (iv). (iii) is obvious. (i) follows from Theorem 5.4. 

Let us prove (ii). From the theorem on differentiation of improper integrals by a parameter (see \cite{k15y}, 11.55 a), we can interchange differentiation of the right hand side of (85) with integration if the derivative of the expression under the integral absolutely converges. However, 
\begin{equation}
\frac{\partial^{k+n}}{\partial  x^{k} \partial  \sigma^{n}}
 F (s)\left( s^{p}+ (-s)^{p} \right) e^{s^{2} \sigma^{2} /2} 
 e^{sx} 
 = s^{k} p(s, \sigma)e^{s^{2} \sigma^{2} /2} e^{sx} 
 \left(F (s) \left(s^{p}+ (-s)^{p}\right)\right)
\end{equation}
where $p(s, \sigma)$ is a polynomial whose degree on $s$ is less than or equal to $2n$. 

Now (85) together with the inequality $m > p + 1 + 2l$ imply that if 
$2n + k \leq 2l$, the integral in (84) absolutely converges. This allows us to 
interchange integration and differentiation. Since the derivatives of all orders 
of the expression under the integral in (84) exist, (ii) follows. The proof for 
$\theta = o$ is entirely similar. In order to prove (v), we simply differentiate 
(84). 
\end{proof}

\begin{theorem} Let $f (x)$ be as in Proposition 5.1 with $m > p + 1$ and let 
$I = [a, p]$  ($a > 0$ if $\theta = o$ and $a \geq 0$ if $\theta = e$) be an interval on the real line. 
Then the set $D_{\epsilon}$ defined by condition 
\begin{equation}
D_{\epsilon} = 
\left\{
x, \sigma, p_{1} : 
\left| 
\Psi^{\theta}_{p_1} (x, \sigma) 
\right|
 \geq 
\epsilon, p_{1} 
\in  [0, p]
\right\}
\end{equation}
is compact if $\epsilon > 0$.
\end{theorem}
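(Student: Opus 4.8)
The plan is to prove that $D_{\epsilon}$ is a closed and bounded subset of $\RR^{3}$ in the coordinates $(x,\sigma,p_{1})$, and then invoke the Heine--Borel theorem. Both halves rest on the integral representation of $\Psi^{\theta}_{p_{1}}$ supplied by Theorem 5.7 (together with Lemma 5.3),
\[
\Psi^{\theta}_{p_{1}}(x,\sigma)=\frac{c}{2\pi i}\int_{-i\infty}^{i\infty}F(s)\bigl(s^{p_{1}}\pm(-s)^{p_{1}}\bigr)e^{s^{2}\sigma^{2}/2}e^{sx}\,ds ,
\]
with $+$ for $\theta=e$ and $-$ for $\theta=o$, valid for $\sigma\ge 0$ and reducing at $\sigma=0$ to $\Psi^{\theta}_{p_{1}}(x,0)=f^{\theta}_{p_{1}}(x)$ as in Theorem 5.7(i). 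On $s=i\omega$ one has $|e^{s^{2}\sigma^{2}/2}|=e^{-\omega^{2}\sigma^{2}/2}\le 1$, $|s^{p_{1}}\pm(-s)^{p_{1}}|\le 2|\omega|^{p_{1}}\le 2\max(|\omega|^{a},|\omega|^{p})$, and, by Proposition 5.1, $|F(i\omega)|\le c|\omega|^{-m}$ with $m>p+1\ge p_{1}+1$. Hence the integrand above is dominated by one fixed function of $\omega$, the same for all $(x,\sigma,p_{1})$ in the region $\{\sigma\ge 0,\ p_{1}\in I=[a,p]\}$, which is integrable near $\omega=0$ (where $F$ is bounded and $|\omega|^{p_{1}}\le 1$) and near $\omega=\pm\infty$ (where $m-p>1$). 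This uniform majorant drives the whole argument.

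\emph{Boundedness.} The $p_{1}$-coordinate lies in the bounded interval $I$. For the $x$- and $\sigma$-coordinates I invoke the uniform-in-$q$ localization of Theorem 5.6: taking a center $\tilde p=(a+p)/2$ and a half-width $\Delta$ just above $(p-a)/2$ gives $[a,p]\subset[\tilde p-\Delta,\tilde p+\Delta]$ with $\tilde p-\Delta>0$ and $\tilde p+\Delta+1<m$, so Theorem 5.6 yields $T>0$ and $Q>0$ with $|\phi^{\theta}_{p_{1}}(x,\rho)|<\epsilon$ whenever $|x|>T$ or $\rho<Q$, uniformly over $\rho>0$ and $p_{1}\in[a,p]$ (and at $\rho=\infty$, i.e.\ for $|f^{\theta}_{p_{1}}(x)|$, by the same estimate). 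In the even case with $a=0$ this choice of $\Delta$ is unavailable, but the conclusions of Theorem 5.6 remain valid for $q\in[0,p]$ because $\Lambda^{e}_{q}\in L_{1}(\RR)$ for every $q\ge 0$, so all the estimates in the proofs of Theorems 5.4--5.5 go through uniformly on $[0,p]$ (the restriction $p-\Delta>0$ being needed only to keep the odd kernels $\Lambda^{o}_{q}$ integrable). Since $\Psi^{\theta}_{p_{1}}(x,\sigma)=\phi^{\theta}_{p_{1}}(x,1/\sigma)$ for $\sigma>0$, and $\rho<Q$ is the same as $\sigma>1/Q$, we get $|\Psi^{\theta}_{p_{1}}(x,\sigma)|<\epsilon$ outside the box $[-T,T]\times[0,1/Q]\times[a,p]$. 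Thus $D_{\epsilon}$ lies in this box and is bounded.

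\emph{Closedness.} It suffices to show that $\Psi^{\theta}_{p_{1}}(x,\sigma)$ is jointly continuous in $(x,\sigma,p_{1})$ on $\{\sigma\ge 0,\ p_{1}\in I\}$. Theorem 5.7 gives continuity (indeed smoothness) in $(x,\sigma)$ for each fixed $p$, so the only new ingredient is continuity in $p_{1}$, which follows from the continuity theorem for parameter-dependent integrals applied to the displayed formula: for each fixed $\omega\ne 0$ the integrand is continuous in $(x,\sigma,p_{1})$ --- the single value $\omega=0$ is irrelevant to the integral --- and the uniform $\omega$-integrable majorant above justifies taking the limit under the integral sign. Therefore $D_{\epsilon}=\{(x,\sigma,p_{1}):|\Psi^{\theta}_{p_{1}}(x,\sigma)|\ge\epsilon\}\cap(\RR^{2}\times I)$ is the intersection of the preimage of the closed set $[\epsilon,\infty)$ under a continuous map with the closed slab $\RR^{2}\times I$, hence closed. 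A closed, bounded subset of $\RR^{3}$ is compact, which completes the proof.

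The step I expect to be the main obstacle is not any single inequality but the systematic upgrade of the $(x,\sigma)$-statements of Theorems 5.5--5.7 so that they hold uniformly across the parameter $p_{1}\in I$: joint continuity in $p_{1}$, and uniform-in-$p_{1}$ localization in $x$ and $\sigma$. The uniform $\omega$-integrable majorant extracted from the Fourier formula disposes of the continuity at once and, via dominated convergence, also of the $\sigma$-tail; the $x$-tail is handled by Theorem 5.6, whose only gap --- the even case with $a=0$ --- is closed by the remark that the even kernels $\Lambda^{e}_{q}$ stay in $L_{1}(\RR)$ down to exponent $0$. With these uniformities in place, the rest is the routine Heine--Borel argument.
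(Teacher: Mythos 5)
Your proof is correct and reaches the same destination as the paper's (closed and bounded in $\RR^{3}$, hence compact), but the two arguments for boundedness are organized differently, and you supply a step the paper omits. The paper's proof applies Theorem 5.6 locally: for each $p_{1}$ in the interval it produces a $\Delta$-neighborhood on which the localization is uniform, then uses compactness of the interval to extract a finite subcover, so that $D_{\epsilon}$ is bounded as a finite union of bounded sets. You instead cover the whole interval with a single application of Theorem 5.6 by re-centering it at $\tilde p=(a+p)/2$ with half-width just above $(p-a)/2$; this is legitimate (the hypothesis $\tilde p+\Delta+1<m$ survives because $m>p+1$ is strict) and is more economical, though it forces you to confront the endpoint $a=0$ in the even case explicitly, which you do correctly by noting that $\Lambda^{e}_{q}\in L_{1}(\RR)$ down to $q=0$ — a boundary case the paper's finite-cover argument quietly has as well, since no $\Delta$-neighborhood of $p_{1}=0$ satisfies $p_{1}-\Delta>0$. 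The second difference is closedness: the paper simply asserts that $D_{\epsilon}$ is closed, whereas you prove joint continuity of $\Psi^{\theta}_{p_{1}}(x,\sigma)$ in all three variables by dominated convergence against a single integrable majorant extracted from the Fourier representation, and then realize $D_{\epsilon}$ as the intersection of a preimage of $[\epsilon,\infty)$ with a closed slab. That continuity in $p_{1}$ is not literally contained in Theorem 5.7 (which fixes $p$), so your argument fills a genuine gap in the paper's exposition; what the paper's route buys in exchange is that the finite-cover mechanism generalizes immediately to any compact parameter set without having to fit it inside one admissible interval.
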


\begin{proof} From Theorems 5.7 it follows that 
$\forall\; p_{1} \in  I$ , $\exists\Delta > 0$ such that the set 
\begin{equation}
D_{\epsilon}(p_{1} ) = 
\left\{
x, \sigma, p_{2} \;:\; p_{2} 
\in  [p_{1} - \Delta, p_{1} + \Delta], \;\;
\left| \Psi^{\theta}_{p_1} (x, \sigma)  \right|  \geq \epsilon
  \right\}
\end{equation} 
is bounded if  $\epsilon> 0$. Since $M$ is compact, there is a finite covering of $M$ with 
intervals $[q_{j} - \Delta_{j} , q_{j} + \Delta_{j} ]$ such that each $D_{\epsilon}$ satisfying (86) for $p_{1} \in  \{q_{j}\}$ and $\Delta = \Delta_{j}$ is bounded; thus, $D_{\epsilon}$ is also bounded as a union of a finite number of bounded sets. Since $D_{\epsilon}$ is closed in $\RR^{3} = \{x, \sigma, p\}$, it follows that 
$D_{\epsilon}$ is compact. 
\end{proof}

\begin{theorem}
If in Theorem 5.8 $m > p + 1 + k$, then the set 
\begin{equation}
D_{\epsilon} = 
\left\{
x, \sigma, p_{1} : 
\left| \frac{\partial^{k}\Psi^{\theta}_{p_1} }{\partial x^{k}} (x, \sigma) 
\right|
 \geq 
\epsilon, p_{1} 
\in  [0, p]
\right\}
\end{equation}
is compact if  $\epsilon> 0$. 
\end{theorem}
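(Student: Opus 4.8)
The plan is to recognise Theorem~5.9 as Theorem~5.8 applied, in effect, to the signal $f^{(k)}$ in place of $f$. Differentiation in $x$ passes through the convolution defining $\phi^{\theta}_{p}$ --- the identity $\partial^{k}_{x}\phi^{\theta}_{p}(x,\rho)=f^{(k)}(x)*\rho^{p+1}\Lambda^{\theta}_{p}(x\rho)$ is the one already used in the proof of Theorem~5.5 --- so by the Fourier correspondences of Lemma~5.3 and Theorem~5.7(iv),
\[
\frac{\partial^{k}}{\partial x^{k}}\Psi^{\theta}_{p_1}(x,\sigma)\ \sim\ s^{k}F(s)\bigl(s^{p_1}\pm(-s)^{p_1}\bigr)e^{s^{2}\sigma^{2}/2}\ =\ \bigl(\mathcal{F}(f^{(k)})\bigr)(s)\bigl(s^{p_1}\pm(-s)^{p_1}\bigr)e^{s^{2}\sigma^{2}/2}.
\]
Thus $\partial^{k}_{x}\Psi^{\theta}_{p_1}$ built from $f$ is exactly the function $\Psi^{\theta}_{p_1}$ built from $f^{(k)}$, and $\partial^{k}_{x}\Psi^{\theta}_{p_1}(x,0)=\partial^{k}_{x}f^{\theta}_{p_1}(x)$ has Fourier transform $s^{k}F(s)(s^{p_1}\pm(-s)^{p_1})$. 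The point is that $f^{(k)}$ satisfies the hypotheses of Proposition~5.1 with the integer $m-k$ in place of $m$ (its derivatives of order at most $m-k$ are derivatives of $f$ of order at most $m$, hence in $L_{1}(\RR)$), and the assumption $m>p+1+k$ of Theorem~5.9 is precisely the inequality $m-k>p+1$ that the argument of Theorem~5.8 demands of its signal. So the set $D_{\epsilon}$ of Theorem~5.9 is exactly the set $D_{\epsilon}$ that Theorem~5.8 produces for $f^{(k)}$, and is compact for $\epsilon>0$.

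Spelling this out, one repeats the proof of Theorem~5.8 with $\Psi^{\theta}_{p_1}$ replaced by $\partial^{k}_{x}\Psi^{\theta}_{p_1}$, checking its two ingredients. First, $D_{\epsilon}$ is closed in $\RR^{3}=\{x,\sigma,p_1\}$: the map $(x,\sigma,p_1)\mapsto\partial^{k}_{x}\Psi^{\theta}_{p_1}(x,\sigma)$ is given on all of $\RR\times[0,\infty)\times[0,p]$ by the integral $\frac{c}{2\pi i}\int_{-i\infty}^{i\infty}s^{k}F(s)\bigl(s^{p_1}\pm(-s)^{p_1}\bigr)e^{s^{2}\sigma^{2}/2}e^{sx}\,ds$, which converges absolutely and locally uniformly by Proposition~5.1 since $m-k-p_1>1$ for every $p_1\in[0,p]$; hence it is jointly continuous (including at $\sigma=0$), so its $\epsilon$-superlevel set intersected with the closed slab $0\leq p_1\leq p$ is closed. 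Second, $D_{\epsilon}$ is bounded. For each fixed bounded $\sigma$ the functions $x\mapsto\partial^{k}_{x}\Psi^{\theta}_{q}(x,\sigma)$ tend to $0$ as $|x|\to\infty$, because their Fourier transforms $s^{k}F(s)(s^{q}\pm(-s)^{q})e^{s^{2}\sigma^{2}/2}$ lie in $L_{1}$ (Proposition~5.1 with $m>p+1+k$), so Riemann--Lebesgue applies; by Theorem~5.6 applied to $f^{(k)}$ these estimates are moreover uniform for $q$ in subintervals. Using Theorem~5.4(iv) for $f^{(k)}$ to dispose of the regime $\rho\to0$ (i.e.\ $\sigma\to\infty$), and the compactness-of-$[Q,R]$ argument from the proof of Theorem~5.5 for the intermediate range of $\sigma$, one obtains: for each $p_1\in[0,p]$ there are $\Delta,T,R>0$ with $|\partial^{k}_{x}\Psi^{\theta}_{q}(x,\sigma)|<\epsilon$ whenever $q\in[p_1-\Delta,p_1+\Delta]\cap[0,p]$ and either $|x|>T$ or $\sigma>R$. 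Covering the compact interval $[0,p]$ by finitely many such $\Delta$-neighbourhoods and taking the largest of the resulting constants confines $D_{\epsilon}$ to a box $\{|x|\leq T_{0},\,0\leq\sigma\leq R_{0},\,0\leq p_1\leq p\}$; a closed bounded subset of $\RR^{3}$ is compact.

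The work is bookkeeping rather than new ideas. The one thing that needs care is the behaviour at $\sigma=0$, where one must know that $\partial^{k}_{x}\Psi^{\theta}_{p_1}(x,0)=\partial^{k}_{x}f^{\theta}_{p_1}(x)$ is still continuous and decays at infinity; this is exactly where the strengthened hypothesis $m>p+1+k$ (equivalently $m-k>p+1$) is used, since it makes $s^{k}F(s)(s^{p_1}\pm(-s)^{p_1})$ integrable and hence lets Riemann--Lebesgue and the inversion theorem apply. Apart from that, every interchange of $\partial^{k}_{x}$ with the convolution, with the limits $\rho\to\infty,\ \rho\to\rho_{0},\ \rho\to0$, and with Fourier inversion is justified by exactly the hypotheses of Theorems~5.4--5.7 once $f$ is replaced by $f^{(k)}$, so the remaining steps are routine transcriptions of the proofs already given.
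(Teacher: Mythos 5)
Your proof is correct and takes essentially the same route as the paper: the paper's entire argument is the observation that applying Theorem 5.8 to $f^{(k)}$ (which satisfies Proposition 5.1 with $m-k>p+1$) produces, via the integral (84) with $F(s)=\mathcal{F}(f^{(k)})=cs^{k}F(s)$, exactly the function $\frac{\partial^{k}\Psi^{\theta}_{p_1}}{\partial x^{k}}(x,\sigma)$. Your additional bookkeeping on closedness, boundedness, and the behaviour at $\sigma=0$ only spells out details the paper leaves implicit.
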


\begin{proof}
Let us take 
$\frac{d^{k} f}{ dx^{k}}$ instead of function $f (x)$ in Theorem 5.8. Then the 
function defined by the right side of (84) where 
$F (s) = \mathcal{F}\left(\frac{d^{k} f}{dx^{k}}\right)$
is actually $\frac{\partial^{k}\Psi^{\theta}_{p} }{\partial x^{k}} (x, \sigma) $.
Thus, Theorem 5.9 follows from Theorem 5.8. 
\end{proof}

\begin{theorem}
(i) Consider functions $\Lambda^{\theta}_{p} (x)$, $\theta = e, o$, as functions of $p$ 
given by equalities (7) and (8) where $p$ is now an arbitrary complex number. 
Then $\Lambda^{\theta}_{p} (x)$ is an analytic, even entire function of two variables $x$ and $p$. 
(ii) Let function $\Psi^{\theta}(p, x, \sigma) := \Psi^{\theta}_{p} (x, \sigma)$ be defined by (84) where $F (s)$ is the Fourier transform of $f (x)$. For $f (x)$ as in Proposition 5.1 with $m > p + 1$, 
$\Psi^{\theta}(p, x, \sigma)$ is a continuous function of three variables. It has continuous 
mixed order derivatives 
$\frac{\partial^{k+n+l }\Psi^{\theta}(p, x, \sigma)}{\partial  x^{k} \partial  \sigma^{n} \partial  p^{l}}$ where $l$ is arbitrary, and $k$ and $n$ satisfy conditions of Theorem 5.7. 
\end{theorem}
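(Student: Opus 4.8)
The plan is to handle the two parts by separate routine arguments: part (i) by a Weierstrass argument on the defining power series, part (ii) by differentiation under the integral sign in (84), building on Theorem 5.7.

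For part (i), I would observe first that by (7) and (8) every partial sum of the series defining $\Lambda^{\theta}_{p}(x)$ is a polynomial in the two variables $x$ and $p$ --- the $n$-th term being $x^{2n}$ (or $x^{2n+1}$) times the polynomial $(-1)^{n}\prod_{k=0}^{n-1}(p+2k+1)$ (or $\prod_{k=0}^{n-1}(p+2k)$) over $(2n)!$ (or $(2n+1)!$), up to the constant $1/\sqrt{2\pi}$ --- hence entire. To pass to the limit, fix $R,S>0$; for $|x|\le R$ and $|p|\le S$ one has
\[
\Bigl|\prod_{k=0}^{n-1}(p+2k+1)\Bigr|\le\prod_{k=0}^{n-1}(S+2k+1)=2^{n}\,\frac{\Gamma(n+c)}{\Gamma(c)},\qquad c=\tfrac{S+1}{2},
\]
and similarly in the odd case. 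Since $(2n)!\ge(n!)^{2}$ and $\Gamma(n+c)\le C_{c}\,n!\,n^{c}$, the $n$-th term of (7) is dominated, uniformly on the polydisc, by a constant (depending only on $R,S$) times $(2R^{2})^{n}n^{c}/n!$, and $\sum_{n}(2R^{2})^{n}n^{c}/n!<\infty$ by the ratio test. The Weierstrass $M$-test then shows the series converges locally uniformly on $\CC^{2}$, so $\Lambda^{\theta}_{p}(x)$ is an entire function of $(x,p)$; evenness of $\Lambda^{e}_{p}$ and oddness of $\Lambda^{o}_{p}$ in $x$ are read off immediately from the surviving powers of $x$.

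For part (ii), I would start from (84), $\Psi^{\theta}(p,x,\sigma)=\frac{c}{2\pi i}\int_{-i\infty}^{i\infty}F(s)\bigl(s^{p}\pm(-s)^{p}\bigr)e^{s^{2}\sigma^{2}/2}e^{sx}\,ds$. The $x$- and $\sigma$-derivatives are already covered by Theorem 5.7: for the admissible orders $k,n$ one may differentiate under the integral, and the integrand acquires a factor $s^{k}q(s,\sigma)$ with $q$ a polynomial in $s$ of degree $\le 2n$. The only new point is the $p$-derivative. Since $\partial_{p}\bigl(s^{p}\pm(-s)^{p}\bigr)=s^{p}\log s\pm(-s)^{p}\log(-s)$, the operator $\partial_{p}^{l}$ turns this factor into a finite sum of terms $(\log s)^{j}s^{p}$ and $(\log s)^{j}(-s)^{p}$ with $0\le j\le l$; on the path $s=i\omega$ one has $|\log s|\le\log|\omega|+\pi/2$, which is $o(|\omega|^{\varepsilon})$ for every $\varepsilon>0$, and $|e^{s^{2}\sigma^{2}/2}|=e^{-\omega^{2}\sigma^{2}/2}\le1$. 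Combining with $|F(s)|\le c|s|^{-m}$ from Proposition 5.1, the integrand of $\partial_{x}^{k}\partial_{\sigma}^{n}\partial_{p}^{l}$ applied to (84) is $O\bigl(|s|^{p+k+2n-m}(\log|s|)^{l}\bigr)$ as $|s|\to\infty$ and $O\bigl(|s|^{p}(\log|s|)^{l}\bigr)$ near $s=0$. Because the hypothesis inherited from Theorem 5.7 forces $m>p+1+2n+k$, the first exponent is $<-1$, so the logarithmic factor is harmless and the integral converges absolutely for every $l$; integrability near $0$ is clear as well. The differentiation-under-the-integral theorem cited in Theorem 5.7 (\cite{k15y}, 11.55a) therefore legitimises the interchange and, at the same time, yields continuity of the mixed derivative in $(p,x,\sigma)$; taking $k=n=l=0$ gives the continuity of $\Psi^{\theta}$.

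The hard part will be the bookkeeping in (i): one must notice that although $\prod_{k=0}^{n-1}(p+2k+1)$ grows like $2^{n}\Gamma(n+c)$, division by $(2n)!$ still leaves a convergent majorant, because $(2n)!$ dominates $(n!)^{2}$ and hence beats $n!$ by an exponential factor. In (ii) there is essentially one idea, isolated above --- $p$-differentiation costs only powers of $\log s$, which never exhaust the integrability margin granted by $m>p+1$ (strengthened as dictated by Theorem 5.7 to accommodate the $x$- and $\sigma$-derivative orders) --- after which everything reduces to the estimates already used in the proofs of Theorems 5.4 and 5.7.
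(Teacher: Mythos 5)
Your proposal is correct and follows essentially the same route as the paper: for (i) the paper likewise establishes locally uniform absolute convergence of the series (7)--(8) on bounded sets of $(x,p)$ (via the ratio test rather than your explicit Gamma-function majorant) and concludes entirety from termwise entirety, and for (ii) it simply defers to the differentiation-under-the-integral argument of Theorem 5.7(ii). Your write-up supplies the details the paper omits, in particular the observation that $p$-differentiation only introduces powers of $\log s$, which do not exhaust the integrability margin.
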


\begin{proof}
Let us assume $\theta = e$. The ratio test shows that the
series (7) %Todo Missing ref%
converges absolutely and uniformly by $p$ and $x$ on every bounded set of pairs 
$(x, p), |x| < R_{1}$, $|p| < R_{2}$ . Since each term
in (7) %Todo Missing ref%
 is an entire function, the sum is also entire. This proves (i). The proof of (ii) is similar to that of (ii) 
of Theorem 5.7 and is therefore omitted. 
\end{proof}
\noindent \textbf{Remark}: Since $\phi^{\theta}_{p} (x, \rho ) = \rho p+1 \Lambda^{\theta}_{p}(x\rho )*f (x)$, it is easy to show that if the $n$th derivatives of $f (x)$ are in $L_{1} (\RR)$ for $n \leq m$, then 
$\frac{\partial^{  n} \phi^{\theta}_{p}}{dx^{n}} (x, \rho ) = 
\rho^{ p+1} \Lambda^{\theta}_{p} (x\rho ) 
* 
\frac{\partial  ^{n} f (x) }{dx^{n}}$
 and therefore 
$\frac{\partial^{n} \Psi^{\theta}_{p}}{dx^{n}} (x, \sigma) = 
\frac{1}{\rho ^{p+1}} \Lambda^{\theta}_{p}( \frac{x}{\sigma} ) 
*
\frac{\partial  ^{n} f (x) }{dx^{n}}$ for $\sigma > 0$. 
This shows that if a certain statement is true about contours $\Psi^{\theta}_{p} (x, \sigma) = c$, 
then it is true for contours 
$\frac{\partial^{n} \Psi^{\theta}_{p} (x, \sigma)}{dx^{n}} = c$ if the assumptions for 
$\frac{\partial^{n} f (x) }{dx^{n}}$ in 
the latter case are the same as the assumptions for $f (x)$ in the former.

\section{Experimental Results}
The purpose of our experiments was to demonstrate the following important 
properties of kernels $g_{p} (x, y)$ (we use $\pi  = e$). 

\noindent \textbf{1)} We show that the trees resulting from convolutions with kernels with 
fractional (not integral) $p$ can distinguish between signals that are indistinguishable by trees resulting from convolutions with integer indexed kernels 
(at least for low integers). Our experiments show that two signals which are 
very close to each other and whose higher order derivatives are also close can 
be distinguished by a convolution with $g_{p}$ with a low $p$. Since higher $p$ results 
in significantly higher noise, this feature is highly desirable computationally 
and shows the advantages of using kernels with fractional $p$ over Gaussian 
and its derivatives. Another advantage is that of constructing a surface for 
zero or level crossings and classifying it topologically with resulting integer 
invariants (the number of handles), which is discussed below. 

Figures 1A and 1B show two signals $y_{1}$ and $y_{2}$ . Their respective convolutions for $p = 0, 1, 2$ are shown in Figs.2A, 2B, 3A, 3B, 4A, and 4B respectively. Obviously, both Witkin and topological trees are indistinguishable. 
However, for fractional $p = 1.35$, we get different trees (see Figs.5A and 5B). 
In order to save space, we do not always draw the trees, assuming the reader 
can reconstruct it directly from the crossing curves. 

\noindent\textbf{2)} We demonstrate that topological trees may be less sensitive to noise than 
Witkin trees. 

In Fig.6A and 6B we show the original signal and the same signal with 
noise added. In Fig.7A and 7B we show the corresponding crossing curves. 
One can see that the crossing of 7B experiences Witkin bifurcation resulting 
in a different tree from 7A; however, topological trees are the same. 

\noindent\textbf{3)} The following set of experiments demonstrates that 

\noindent(a) the trees resulting from fractional convolutions are relatively insensitive 
to noise (even though topological trees are more insensitive than the Witkin 
ones); 

\noindent(b) local modification of signals results in different trees allowing us to dis- 
tinguish between the original and modified signal; 

\noindent(c) bifurcations occur frequently enough to provide additional fractional
trees for low p, but not so frequently as to make computations difficult. 
Fig.8A shows a noiseless contour of England (256 samples). 

\noindent(d) The contours best and the least sensitive to noise tend to be for $p < 2$. 
The trees corresponding to the $x$ coordinate were found for various $p$ between 0.5 and 2.15. Here we only show contours at and around bifurcation points. Fig.9A and 9B corresponds to $p$ = 0.6 and $p = 0.7$ respectively, 
Fig.10A and 10B to $p$ = 0.8 (bifurcation point) and $p$ = 0.8 respectively, 
Fig.11A and 11B to $p$ = 1.02 (bifurcation point) and $p$ = 1.04 respectively. 
The contours of the noisy map of England are shown in Figs. 12A, 12B, 
13A and 13B for $p$ = 0.6, $p$ = 0.7, $p$ = 0.78, and 0.79 (bifurcation point) and 
Figs.14A, 14B, 15A and 15B corresponding to $p$ = 0.8, $p$ = 0.9 (bifurcation 
point) $p$ = 0.95, $p$ = 0.97 respectively.
%todo:indent a-d? (d ends where?)

As the results of simulations have shown, the trees in the region $p \in  [0.5, 1.5]$ 
are the same as for the noiseless map, but the bifurcations appear a little 
earlier. From these results, it is also clear that topological trees are easier 
to construct from the given contour. 
We have repeated this experiment by locally modifying the map (chopping off the 
southwestern part of England's map). See Fig.16 for the noiseless 
and Fig.17 for the noisy maps. For noiseless signals, we have contours in 
Figs.18A, 18B, 19A and 19B for $p$ = 0.6, $p$ = 0.7 (bifurcation point), $p$ = 0.8, 
p = 0.9 and in Figs.20A, 20B, 21A and 21B for $p$ = 1, $p$ = 1.1, $p$ = 1.4, and 
p = 1.5 (bifurcation point) and in Figs.22A, 22B are contours for $p$ = 1.6 
and $p$ = 1.7 respectively. We can see that the resulting trees are different 
from the previous unchopped contour of England. We only show the noisy 
maps for chopped off England for $p$ = 0.72, and $p$ = 0.74 in Figs.23A, 23B 
respectively. The bifurcation points are slightly delayed at the presence of 
noise and occur at $p$ = 0.72 and $p$ = 1.7 (not shown). Experiments just 
described were conducted for other signals as well. We chose level crossings 
at values of $c (\pi (x, \rho ) = c)$ close to 0 to avoid contours which do not close 
up. 

\section{Conclusion}
Below we discuss important conceptual contributions of this paper as well 
as a practical significance of our results. With the assumption of monotonicity and scaling property we have constructed a maximal set of kernels. 
This allows us to construct maximal sets of tree invariants (i.e., trees which 
are invariant with respect to shape preserving signal transformations) for 
classification, recognition and feature extraction of signals. 

\noindent(a) In Theorem 4.1 we characterized the maximal set of kernels which satisfy 
our assumptions (axioms) for a linear scale-space, including the assumptions 
of scaling invariance, strong monotonicity, and stability. 

\noindent(b) Fourier transforms of our kernels were derived in Theorem 3.3 for quick 
and efficient computations of convolutions. 

\noindent(c) We have introduced a new topological method of tree construction which 
we believe is much less sensitive to noise than WitkinÕs method. 

\noindent(d) For the purposes of applications, we demonstrated the usefulness of 
introducing kernels for fractional $p$ in practical problems by showing that the 
Gaussian and its derivatives fail to distinguish between two similar signals 
whereas the kernels for fractional $p$ succeed in this task. Furthermore, we 
have observed that our scale-space method is relatively immune to noise for 
low $p$ as well as capable to detect a local modification of signals. In addition 
our methods allow us to
\begin{itemize}
\item[(*)] distinguish between the signals indistinguishable by a Gaussian ker- 
nel; 
\item[(*)] use the topological trees (along with WitkinÕs trees) which are less 
noise sensitive; and 
\item[(*)] use level curves which assure that the contours will close up so that 
the tree can be constructed. 
\end{itemize}

\noindent e) We also gave a rigorous treatment of the structure of level crossings 
of functions $\frac{\partial^{k}\Psi (x,\sigma,p)}{ \partial  x^{k}}$ which, in particular, generalizes many assumptions 
about Gaussian convolutions (most of them have never been proved before). 
Thus, we have given a solid foundation to many papers \cite{k2y, k20y, k21y, k11y, 
k4y} for the particular case of the Gaussian convolutions $(\Lambda^{e}_{p} * f )$ as a family 
of kernel function. 

We, therefore, believe that our method will significantly enhance the 
applications of the linear scale-space method.

%\bibliographystyle{amsplain}	% (uses file "plain.bst")
%\bibliography{Lux_refs}		% expects file "Lux_refs.bib"

%

\end{document}